\documentclass[12pt]{amsart}
\usepackage[utf8]{inputenc}
\usepackage{mathrsfs}
\usepackage{amssymb}
\usepackage{bm}
\usepackage{graphicx}
\usepackage[centertags]{amsmath}
\usepackage{amsfonts}
\usepackage{amsthm}
\linespread{1.18}
\usepackage{enumerate}
\usepackage{tocvsec2}
\usepackage{xcolor}
\usepackage[margin=1in]{geometry}

\newtheorem{theorem}{Theorem}[section]
\newtheorem*{ta}{Theorem 1}
\newtheorem*{tb}{Theorem 2}
\newtheorem*{corollary*}{Main Corollary}

\newtheorem{corollary}[theorem]{Corollary}
\newtheorem{lemma}[theorem]{Lemma}
\newtheorem{rem}[theorem]{Remark}

\newtheorem{proposition}[theorem]{Proposition}

\newtheorem{claim}[theorem]{Claim}

\newtheorem{question}{Question}[section]
\theoremstyle{definition}


\newcommand{\rr}{\mathbb{R}}
\newcommand{\nn}{\mathbb{N}}

\newcommand{\ee}{\varepsilon}



\newcommand{\ddd}{\mathcal{D}}

\newcommand{\hhh}{\mathcal{H}}

\newcommand{\kkk}{\mathcal{K}}

\newcommand{\ttt}{\mathcal{T}}
\newcommand{\eee}{\mathcal{E}}
\newcommand{\nnn}{\mathcal{N}}




\newcommand{\cat}{^\smallfrown}

\begin{document}

\title[Szlenk and $w^*$-dentability indices]{A note on the relationship between the  Szlenk \\ and $w^*$-dentability indices  of arbitrary $w^*$-compact sets}
\author{R.M. Causey}

\begin{abstract} We prove the optimal estimate between the Szlenk and $w^*$-dentability indices of an arbitrary $w^*$-compact subset of the dual of a Banach space.  For a given $w^*$-compact, convex subset $K$ of the dual of a Banach space, we introduce a two player game the winning strategies of which determine the Szlenk index of $K$. We give applications to the $w^*$-dentability index of a Banach space and of an operator.

\end{abstract}

\maketitle

\section{Introduction}

Since its inception in \cite{Szlenk}, the Szlenk index has been an important tool in renorming theory \cite{KOS}, \cite{R}, \cite{GKL}.  In \cite{CD}, the notion of $\xi$-asymptotically uniformly smooth operators was given, with the $0$-asymptotically uniformly smooth notion generalizing the notion of an asymptotically uniformly smooth Banach space.  It was shown in \cite{CD} that an operator $A:X\to Y$ has Szlenk index not exceeding $\omega^{\xi+1}$ if and only if there exists an equivalent norm $|\cdot|$ on $Y$ making $A:X\to (Y, |\cdot|)$ $\xi$-asymptotically uniformly smooth.  Applying this to the identity of a Banach space, we deduce that a Banach space $X$ has Szlenk index not exceeding $\omega^{\xi+1}$ if and only if there exists an equivalent norm $|\cdot|$ on $X$ such that $(X, |\cdot|)$ is $\xi$-asymptotically uniformly smooth.  

Another index has been used to study the class of Asplund spaces, the $w^*$-dentability index.  The $w^*$-dentability index is distinct from the Szlenk index, but each characterizes $w^*$-fragmentability of a $w^*$-compact set.  Since both indices characterize $w^*$-fragmentability, it is natural to ask what relationship must exist between the indices. It follows immediately from the definitions that the Szlenk index of a set cannot exceed its $w^*$-dentability index. We discuss in the next section the different results obtained in the literature regarding the relationship between the $w^*$-dentability and Szlenk indices.  

In what follows, $Sz(K)$ (resp. $Dz(K)$) will denote the Szlenk (resp.  $w^*$-dentability index) of the set $K$.

\begin{ta} Let  $X$ be a Banach space, let $K\subset X^*$ be $w^*$-compact, and let $\xi$ be an ordinal. \begin{enumerate}[(i)]\item If $Sz(K)\leqslant \omega^\xi$, then $Dz(K)\leqslant \omega^{1+\xi}$. \item Suppose that  $K$ is convex. Then  $Dz(K)\leqslant \omega Sz(K)$, and if $Sz(K)\geqslant \omega^\omega$, $Dz(K)=Sz(K)$. \end{enumerate} 
\label{main theoremm}
\end{ta}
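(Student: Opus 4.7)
The plan is to reduce part (i) to the convex inequality in (ii), so I would first establish (ii) and then apply it to the $w^*$-closed convex hull $L=\overline{\mathrm{co}}^{\,w^*}(K)$. The last assertion of (ii) then falls out arithmetically from $Dz(L)\leqslant\omega\cdot Sz(L)$, using that the Szlenk index of any $w^*$-compact set is either $0$ or a power of $\omega$, together with $1+\xi=\xi$ for $\xi\geqslant\omega$.

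For the convex inequality, I would proceed by transfinite induction on $Sz_\varepsilon(L)$ to show $Dz_\varepsilon(L)\leqslant \omega\cdot Sz_\varepsilon(L)$ for every convex $w^*$-compact $L$ and every $\varepsilon>0$. The base case $Sz_\varepsilon(L)\leqslant 1$ is the technical heart: when $s_\varepsilon(L)=\emptyset$, every $x\in L$ sits in some $w^*$-open $U$ with $\mathrm{diam}(L\cap U)<\varepsilon$, and one argues that finitely many $\varepsilon$-dentability derivations remove $x$ by a peeling argument. Writing $U$ as the intersection of finitely many $w^*$-open half-spaces, convexity together with Hahn--Banach lets one successively separate $x$ from $w^*$-closed convex pieces of $L\setminus U$ by slices through $x$ which, after enough peelings, lie inside $U$ and therefore have diameter less than $\varepsilon$. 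The inductive step applies the base case to the $w^*$-closed convex hull $M$ of $s_\varepsilon(L)$, using the monotonicity $d_\varepsilon(A)\subseteq d_\varepsilon(B)$ for $A\subseteq B$ together with a convex-hull stability estimate $Sz_\varepsilon(M)\leqslant Sz_\varepsilon(s_\varepsilon(L))$.

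For (i), set $L=\overline{\mathrm{co}}^{\,w^*}(K)$. Since any slice of $K$ is the restriction to $K$ of a slice of $L$ of no smaller diameter, the $\varepsilon$-dentability derivation is monotone in the set and $Dz(K)\leqslant Dz(L)$. Combining with the convex estimate yields $Dz(K)\leqslant \omega\cdot Sz(L)$, so what remains is
\[
Sz(L)\leqslant \omega^\xi\quad\text{whenever}\quad Sz(K)\leqslant\omega^\xi.
\]
This convex-hull stability is where the two-player game promised in the introduction enters: the game is set up on convex $w^*$-compact sets so that a winning strategy of complexity at most $\omega^\xi$ for Player~II on $K$ lifts, via a convex-combinations construction intrinsic to the rules, to one of the same complexity on $L$. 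Granting that, $Dz(K)\leqslant \omega\cdot\omega^\xi=\omega^{1+\xi}$, which is (i). The final assertion of (ii) is then immediate: if $Sz(K)\geqslant\omega^\omega$, write $Sz(K)=\omega^\xi$ with $\xi\geqslant\omega$, so $Dz(K)\leqslant \omega^{1+\xi}=\omega^\xi=Sz(K)$, while the reverse $Sz(K)\leqslant Dz(K)$ is trivial since every slice is $w^*$-open.

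The principal obstacle is the convex-hull stability of $Sz$ that appears both in the inductive step for (ii) and in the reduction of (i) to (ii). A direct transfinite derivation argument is obstructed by the fact that $s_\varepsilon$ does not preserve convexity, so Szlenk derivations do not commute with the closed convex-hull operation; the game apparatus is designed precisely to replace the derivation by an object that does survive convexification.
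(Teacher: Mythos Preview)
Your logical structure is inverted relative to the paper, and the direct inductive proof of (ii) that you propose does not close.

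In the paper, (i) is the hard statement and (ii) is an immediate corollary of (i): once you know $Sz(K)\leqslant\omega^\xi\Rightarrow Dz(K)\leqslant\omega^{1+\xi}$, and you use that for convex $K$ one always has $Sz(K)=\omega^\xi$ for some $\xi$ (or $\infty$), the inequality $Dz(K)\leqslant\omega\,Sz(K)$ and the equality for $\xi\geqslant\omega$ drop out exactly as you wrote. The substance is entirely in (i), and the paper proves (i) not by any peeling or slice-induction on $K$ but by passing to $L_p(X)$: Lancien's lemma (Theorem~\ref{Lancien thing}) embeds $d_{2\ee}^\xi(K)$ into $s_\ee^\xi(M)$ for the set $M\subset L_q(X^*)$ of $K$-valued simple functions, so $Dz(K)>\omega^{1+\xi}$ produces an $\ee$-separated, normally $w^*$-closed tree of order $\omega^{1+\xi}$ in $M$; this is converted (Corollary~\ref{corollary}) into a weakly null tree of simple functions in $\tfrac12 B_{L_\infty(X)}$ whose convex combinations stay uniformly large against $M$; and Theorem~\ref{work}, which is where the $\Gamma_\xi$-game and Player~I's winning strategy are actually used, shows that such a tree cannot exist when $Sz(K)\leqslant\omega^\xi$. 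The convex-hull stability you invoke is used only at the very start, to replace $K$ by its $w^*$-closed symmetrized convex hull, and is quoted as \cite[Theorem~1.5]{C2}; it is not re-proved via the games here.

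Your transfinite induction on $Sz_\ee(L)$ breaks at the inductive step. You need $Sz_\ee\bigl(\overline{\text{co}}^{\,w^*}(s_\ee(L))\bigr)\leqslant Sz_\ee(s_\ee(L))$ to apply the hypothesis to $M=\overline{\text{co}}^{\,w^*}(s_\ee(L))$, but convex hulls do not preserve $\ee$-Szlenk indices; the known stability (the cited \cite{C2} result, or \cite{LPR}) is only at the level of $\omega^\xi$, which is useless for an induction indexed by $Sz_\ee$. Even granting stability, you have not explained why knowledge of $Dz_\ee(M)$ controls $Dz_\ee(L)$: there is no reason that $d_\ee^\omega(L)\subset M$, since $d_\ee$ applied to $L$ need not land in the convex hull of $s_\ee(L)$. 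The base case you sketch (that $s_\ee(L)=\varnothing$ forces $Dz_\ee(L)<\omega$) is also not a routine Hahn--Banach peeling; having small $w^*$-open neighborhoods at every point does not obviously yield small slices in finitely many steps, and this is precisely the gap between fragmentability and dentability that the $L_p$ machinery is designed to bridge.
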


As was discussed in \cite{HS}, for every $n\in \nn\cup \{0\}$, there exist Banach spaces $X_n$, $Y_n$ such that $Sz(X_n)=Sz(Y_n)=\omega^n$ while $Dz(X_n)=\omega^{n+1}$ and $Dz(Y_n)=\omega^n$.     These examples show the sharpness of Theorem \ref{main theoremm}.

In \cite{AJO}, it was shown that one can compute the Szlenk index of a separable Banach space containing no isomorph of $\ell_1$ by considering convex combinations of the branches of trees of vectors satisfying a certain weak nullity condition.  We also recall a particular two-player game played on a Banach space.  For $\ee>0$ and every $n\in \nn$, Player I chooses a subspace $Z_1^n$ of $X$ such that $\dim(X/Z_1^n)<\infty$, Player II chooses a vector $x_1^n\in B_{Z_1^n}$, $\ldots$, Player I chooses a subspace $Z_n^n$ of $X$ such that $\dim(X/Z_n^n)<\infty$, and Player II chooses a vector $x_n\in B_{Z_n^n}$.  We say that Player II wins the game if for every $n\in \nn$, $\|n^{-1}\sum_{i=1}^n x_i^n\|\geqslant \ee$, and Player I wins otherwise.  Then if $X$ is a separable Banach space not containing $\ell_1$, the results of \cite{AJO} combined with the results of \cite{KOS} imply that $Sz(X)\leqslant \omega$ if and only if for every $\ee>0$, Player I has a winning strategy in this game.  Since this game is determined, $Sz(X)>\omega$ if and only if for some $\ee>0$, Player II has a winning strategy in this game.  Note that we require a certain ``smallness'' condition on a \emph{specific} convex combination $n^{-1}\sum_{i=1}^n x_i^n$ of $(x_i^n)_{i=1}^n$.

In \cite{C}, the results of \cite{AJO} were extended to allow one to compute the Szlenk index of an arbitrary $w^*$-compact subset of the dual of an arbitrary Banach space.  In analogy to the game defined above, we wish to define for a given ordinal $\xi$ a certain game the winning strategies of which determine whether the Szlenk index of an arbitrary $w^*$-compact set exceeds $\omega^\xi$.  Given a Banach space $X$, let $\ddd$ denote the subspaces of $X$ having finite codimension in $X$, and let $\kkk$ denote the norm-compact subsets of $X$. Let $K\subset X^*$ be $w^*$-compact.  Suppose that $\Lambda$ is a set, $T$ is a non-empty collection of non-empty sequences in $\Lambda$ such that there does not exist an infinite sequence $(\zeta_i)_{i=1}^\infty\subset \Lambda$ all the finite initial segments of which lie in $T$ (such a collection $T$ is called a \emph{non-empty, well-founded} $B$-\emph{tree}). Assume also that $\mathbb{P}:T\to \rr$ is a fixed function.  For $\ee>0$, we let Player I choose $Z_1\in \ddd$ and $\zeta_1\in \Lambda$ such that  $(\zeta_1)\in T$.  Player II then chooses $C_1\in \kkk$.  Next, assuming $(\zeta_i)_{i=1}^n\in T$, $Z_1, \ldots, Z_n\in \ddd$, and $C_1, \ldots, C_n\in \kkk$ have been chosen, if $(\zeta_i)_{i=1}^n$ has no proper extensions in $T$, the game terminates.  Otherwise Player I chooses $\zeta_{n+1}\in \Lambda$ such that $(\zeta_i)_{i=1}^{n+1}\in T$ and $Z_{n+1}\in \ddd$.  Player II chooses $C_{n+1}\in \kkk$.  Our assumptions on $T$ yield that this game must terminate after finitely many turns.  Let us assume the game terminates with the choices $(\zeta_i)_{i=1}^n$, $(Z_i)_{i=1}^n$, $(C_i)_{i=1}^n$.  We  say that Player II wins the game if there exist a sequence $(x_i)_{i=1}\in \prod_{i=1}^n( B_X\cap Z_i\cap C_i)$ and $x^*\in K$ such that $$\text{Re\ }x^*\Bigl(\sum_{i=1}^n \mathbb{P}((\zeta_j)_{j=1}^i)x_i\Bigr)\geqslant \ee,$$ and let us say Player I wins otherwise.  Let us refer to this as the $(\ee, K, \mathbb{P})$ game on $T.\ddd.\kkk$.   Our main result in this direction is the following.

\begin{tb} For every ordinal $\xi$, there exists a non-empty, well-founded $B$-tree $\Gamma_\xi$ on $[0, \omega^\xi]$ and a function $\mathbb{P}_\xi:\Gamma_\xi\to \rr$ such that for any Banach space $X$ and any $w^*$-compact $K\subset X^*$, $Sz(K)>\omega^\xi$ if and only if there exists $\ee>0$ such that Player II has a winning strategy in the $(\ee, K, \mathbb{P}_\xi)$-game on $\Gamma_\xi.\ddd.\kkk$, and $Sz(K)\leqslant \omega^\xi$ if and only if for every $\ee>0$, Player I has a winning strategy in the $(\ee, K, \mathbb{P}_\xi)$ game on $\Gamma_\xi.\ddd.\kkk$.

\end{tb}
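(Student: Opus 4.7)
The plan is to construct $\Gamma_\xi$ and $\mathbb{P}_\xi$ simultaneously by transfinite induction on $\xi$, verifying both halves of the equivalence at each stage. Throughout, I will use that for any fixed $\ee>0$ each play has finite length (since $\Gamma_\xi$ is well-founded), so once one implication is established by an explicit strategy construction, the game is determined by backward induction and the two equivalences in the statement reduce to a single one.

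For the base case $\xi=0$, take $\Gamma_0=\{(1)\}$ with $\mathbb{P}_0((1))=1$. The game has a single round, and Player II wins iff for every $Z_1\in\ddd$ she can produce $C_1\in\kkk$, $x_1\in B_X\cap Z_1\cap C_1$, and $x^*\in K$ with $\text{Re\ }x^*(x_1)\geqslant\ee$. Using the $w^*$-compactness of $K$ together with the definition of $s^1_\ee(K)$, one checks directly that this is equivalent to $s^1_\ee(K)\neq\emptyset$, that is $Sz(K)>1=\omega^0$; the compact set $C_1$ is absorbed into Player II's strategy since she is free to include any single desired witness in it.

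For the successor step, construct $\Gamma_{\xi+1}$ so that Player I's first move declares an integer $n\in\nn$, after which play consists of $n$ concatenated copies of $\Gamma_\xi$, and let $\mathbb{P}_{\xi+1}$ scale the weights of the $k$-th copy of $\mathbb{P}_\xi$ by $1/n$ to effect an outer $n$-fold average. The equivalence $Sz(K)>\omega^{\xi+1}$ iff some $\ee>0$ satisfies $s^{k\omega^\xi}_\ee(K)\neq\emptyset$ for every $k\in\nn$ then translates, via the induction hypothesis applied to each $w^*$-compact set $s^{k\omega^\xi}_\ee(K)$, into Player II's ability to win $n$ successive $\xi$-games and combine the resulting vectors through the outer averaging weights. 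For limit $\xi$, fix $\xi_n\nearrow\xi$ and let $\Gamma_\xi$ begin with Player I's choice of $n\in\nn$ followed by play on a copy of $\Gamma_{\xi_n}$ relabeled into $[0,\omega^\xi]$; since $Sz(K)>\omega^\xi$ iff $Sz(K)>\omega^{\xi_n}$ for some $n$, the induction hypothesis yields the equivalence directly.

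The principal obstacle will be the reverse direction of the successor step: deducing $s^{\omega^{\xi+1}}_\ee(K)\neq\emptyset$ from a winning strategy for Player II. I will feed arbitrary sequences of Player-I moves into the strategy to produce a well-founded tree of witnesses $(x_i)$ and functionals $x^*\in K$, and then assemble these into a weighted convex tree in $K$ whose existence, by the convex-combination characterization of the Szlenk index developed in \cite{C} (extending AJO to general $w^*$-compact sets), forces the claimed lower bound on the index. The compactness constraint on the $C_i$'s is exactly what renders the extracted tree amenable to this machinery, since it places witnesses across parallel branches in a controlled norm-compact family. Reconciling the adaptive, branch-by-branch nature of Player II's $C_i$-choices with the globally coherent convex-tree structure required on the Szlenk side is where most of the technical work will lie.
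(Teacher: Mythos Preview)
Your inductive scheme has a genuine gap: the qualitative statement you are trying to prove is too weak to serve as its own induction hypothesis. In the limit step you need a \emph{single} $\ee>0$ such that Player~II wins the $(\ee,K,\mathbb{P}_{\xi_n})$-game for every $n$, but the hypothesis applied to each $\xi_n$ only hands you possibly different $\ee_n$'s. (Your stated equivalence ``$Sz(K)>\omega^\xi$ iff $Sz(K)>\omega^{\xi_n}$ for some $n$'' is in any case false; the correct relation runs through a fixed $\ee$ and requires \emph{all} $n$.) The successor step has the analogous defect on the functional side: after Player~I declares $n$, your plan is to invoke the hypothesis on each derived set $s_\ee^{k\omega^\xi}(K)$, but the winning condition in the $(\xi+1)$-game demands one $x^*\in K$ that witnesses the full weighted sum across all $n$ blocks, whereas the hypothesis gives you $n$ unrelated functionals. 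Nothing in your outline explains how to glue these.

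The paper does not argue by induction on $\xi$ at all. Both directions are proved in one shot from the tree characterization of $Sz(K,\ee)$ established in \cite{C}: if $Sz(K)>\omega^\xi$ one obtains a normally weakly null tree on $\ttt_{\omega^\xi}.\ddd$ whose branches lie in $\hhh^K_\ee$, and Player~II simply plays the singletons from that tree; conversely, from a winning strategy one first extracts (Lemma~\ref{finish}) a normally weakly null family $(x_{(s,t)})$ and functionals $(x^*_t)$ with $\text{Re\,}x^*_t(\sum_{s\preceq t}\mathbb{P}_\xi(s)x_{(s,t)})\geqslant\ee$, then applies the stabilization theorem \cite[Theorem~4.3]{C2} to upgrade the averaged inequality to a pointwise one, and finally uses Lemma~\ref{finish2} and \cite{C} again. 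The stabilization step is precisely what bridges ``a convex combination is large'' and ``every term is large,'' and it is the ingredient your proposal is missing.
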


\section{Definitions}

\subsection{Definition of the indices}
Let $X$ be a Banach space and let $K\subset X^*$.  For $\ee>0$, we let $s_\ee(K)$ denote those $x^*\in K$ such that for every $w^*$-neighborhood $V$ of $x^*$, $\text{diam}(V\cap K)>\ee$. We let $d_\ee(K)$ denote those $x^*\in K$ such that for every $w^*$-open slice $S$ containing $x^*$, $\text{diam}(S\cap K)>\ee$.  Recall that a $w^*$-open slice is a subset of $X^*$ of the form $\{y^*: \text{Re\ }y^*(x)>a\}$ for some $x\in X$ and $a\in \rr$.   We then define $s_\ee^0(K)=K$, $s_\ee^{\xi+1}(K)=s_\ee(s_\ee^\xi(K))$, and $s^\xi_\ee(K)=\cap_{\zeta<\xi}s_\ee^\zeta(K)$ when $\xi$ is a limit ordinal.  We set $Sz(K, \ee)=\min\{\xi: s_\ee^\xi(K)=\varnothing\}$ if this class of ordinals is non-empty, and we set $Sz(K, \ee)=\infty$ otherwise. We let $Sz(K)=\sup_{\ee>0}Sz(K, \ee)$, where we agree that $\xi<\infty$ for all ordinals $\xi$. If $X$ is a Banach space, we let $Sz(X, \ee)=Sz(B_{X^*}, \ee)$ and $Sz(X)=Sz(B_{X^*})$.  If $A:X\to Y$ is an operator, we let $Sz(A, \ee)=Sz(A^*B_{Y^*}, \ee)$ and $Sz(A)=Sz(A^*B_{Y^*})$.   We define $d_\ee^\xi(K)$, $Dz(K, \ee)$, $Dz(K)$, etc.,  similarly. It is quite clear that $Sz(K)\leqslant Dz(K)$.    

We recall that $K$ is said to be $w^*$-\emph{fragmentable} provided that for every non-empty subset $L$ of $K$ and every $\ee>0$, there exists a $w^*$-open subset $U$ of $X^*$ such that $U\cap L\neq \varnothing$ and $\text{diam}(U\cap L)<\ee$.  We say that $K$ is $w^*$-\emph{dentable} if for any non-empty subset $L$ of $K$ and every $\ee>0$, there exists a $w^*$-open slice $S$ of $X^*$ such that $S\cap L\neq \varnothing$ and $\text{diam}(S\cap L)<\ee$.   It is clear that $K$ is $w^*$-fragmentable (resp. $w^*$-dentable) if and only if $Sz(K)$ (resp. $Dz(K)$) is an ordinal.  Moreover, $w^*$-fragmentability and $w^*$-dentability are equivalent, which is a consequence of Theorem \ref{main theoremm}.  Since these properties are equivalent, it is natural to consider the relationship between $Sz(K)$ and $Dz(K)$. Lancien \cite{L0} proved using descriptive set theoretic techniques that there exists a function $\phi:[0, \omega_1)\to [0, \omega_1)$ such that if $\xi<\omega_1$ and if $X$ is a Banach space with $Sz(X)\leqslant \xi$, $Dz(X)\leqslant \phi(\xi)$.  Raja \cite{R} proved that for any Banach space (without assumption of countability of $Sz(X)$) that $Dz(X)\leqslant \omega^{Sz(X)}$.    H\'{a}jek and Schlumprecht \cite{HS} showed that if $Sz(X)$ is countable, $Dz(X)\leqslant \omega Sz(X)$.   The content of Theorem \ref{main theoremm} extends this result of H\'{a}jek and Schlumprecht to the general case of an arbitrary $w^*$-compact, convex set $K$ as opposed to the case $K=B_{X^*}$, and removes the hypothesis of countability of $Sz(K)$. 

We note that the most interesting case, of course, is the case $K=B_{X^*}$.  However, the case $K=A^*B_{Y^*}$ for an operator $A:X\to Y$ is also of interest.  We refer the reader to \cite{Brooker}, \cite{CD}, and \cite{Cpower} for results concerning the Szlenk index of an operator, including renorming theorems for asymptotically uniformly smooth operators.  However, to our knowledge,  the $w^*$-dentability index of an operator has not been investigated.   

\subsection{$B$-trees}

Given a set $\Lambda$, we let $\Lambda^{<\nn}$ denote the finite sequences in $\Lambda$, including the empty sequence, $\varnothing$. We write $s\preceq t$ if $s$ is an initial segment of $t$.  If $t\in \Lambda^{<\nn}$, we let $|t|$ denote the length of $t$ and for $0\leqslant i\leqslant |t|$, $t|_i$ is the initial segment of $t$ having length $i$. If $\varnothing\neq t$, we let $t^-=t|_{|t|-1}$. We let $s\cat t$ denote the concatenation of $s$ and $t$. A subset $T$ of $\Lambda^{<\nn}$ is called a \emph{tree} if for all $t\in T$ and $s\preceq t$, $s\in T$.  A subset $T$ of $\Lambda^{<\nn}\setminus \{\varnothing\}$ will be called a $B$-\emph{tree} provided that for any $t\in T$ and any $\varnothing\prec s\preceq t$, $s\in T$.   We let $MAX(T)$ denote the members of $T$ which are $\prec$-maximal and $T'=T\setminus MAX(T)$.  We define $T^0=T$, $T^{\xi+1}=(T^\xi)'$, and $T^\xi=\cap_{\zeta<\xi}T^\zeta$ when $\xi$ is a limit ordinal.  We say $T$ is \emph{well-founded} if there exists an ordinal $\xi$ such that $T^\xi=\varnothing$, and we let $o(T)$ denote the smallest such $\xi$.  If no such $\xi$ exists, we say $T$ is \emph{ill-founded} and write $o(T)=\infty$.  Note that $o(T)=\infty$ if and only if there exists an infinite sequence $(\zeta_i)_{i=1}^\infty\subset \Lambda$ such that $(\zeta_i)_{i=1}^n\in T$ for all $n\in \nn$. 

Recall that for any $B$-trees $S,T$, a function $\theta:S\to T$ is called \emph{monotone} provided that for any $\varnothing\prec s\prec s_1\in S$, $\theta(s)\prec \theta(s_1)$.  

Given non-empty sets $\Lambda_1, \ldots, \Lambda_k$, we identify the set $(\prod_{i=1}^k \Lambda_i)^{<\nn}$ with the set $\{(t_i)_{i=1}^k\in \prod_{i=1}^k \Lambda_i^{<\nn}: |t_1|=\ldots =|t_k|\}$.  The identification is obtained by identifying $\varnothing$ with $(\varnothing, \ldots, \varnothing)$ and, for $n>0$, $$\bigl((a_{1i}, \ldots, a_{ki})\bigr)_{i=1}^n\leftrightarrow \bigl((a_{1i})_{i=1}^n, (a_{2i})_{i=1}^n, \ldots, (a_{ki})_{i=1}^n\bigr).$$

Let $X$ be a Banach space and let $T$ be a $B$-tree.  Let us say that a collection $(x_t)_{t\in T}\subset X$ is \emph{weakly null} provided that for every ordinal $\xi$, every $t\in (T\cup \{\varnothing\})^{\xi+1}$, and every $Z\leqslant X$ with $\dim(X/Z)<\infty$, there exists $s\in T^\xi$ with $s^-=t$ such that $x_s\in Z$.

We last define some $B$-trees which will be important for us.  If $(\zeta_i)_{i=1}^n$ is a sequence of ordinals and $\zeta$ is an ordinal, we let $\zeta+(\zeta_i)_{i=1}^n=(\zeta+\zeta_i)_{i=1}^n$.  If $G$ is a collection of non-empty sequences of ordinals and $\zeta$ is an ordinal, we let $\zeta+G=\{\zeta+t: t\in G\}$.  We let $$\ttt_0=\varnothing,$$ $$\ttt_{\xi+1}=\{(\xi+1)\cat t: t\in \{\varnothing\}\cup \ttt_\xi\},$$ and if $\xi$ is a limit ordinal, we let $$\ttt_\xi=\underset{\zeta<\xi}{\bigcup}\ttt_{\zeta+1}.$$  Note that this union is a totally incomparable union.   For each ordinal $\xi$, $\ttt_\xi$ is a $B$-tree on $[0, \xi]$ with $o(\ttt_\xi)=\xi$.

Next, let $$\Gamma_0=\{(1)\},$$ \begin{align*} \Gamma_{\xi+1}=\Bigl\{(\omega^\xi(n-1)+t_1)\cat \ldots \cat (\omega^\xi(n-m)+t_m): &  n\in \nn, 1\leqslant m\leqslant n, t_i\in \Gamma_\xi,  \\ & t_i\in MAX(\Gamma_\xi)\text{\ for each}1\leqslant i<m\Bigr\},\end{align*} and when $\xi$ is a limit ordinal, $$\Gamma_\xi=\underset{\zeta<\xi}{\bigcup} (\omega^\zeta+\Gamma_{\zeta+1}).$$   For each ordinal $\xi$, $\Gamma_\xi$ is a $B$-tree on $[1, \omega^\xi]$ with $o(\Gamma_\xi)=\omega^\xi$.   We define $\mathbb{P}_\xi:\Gamma_\xi\to [0,1]$ by letting $\mathbb{P}_0((1))=1$, $$\mathbb{P}_{\xi+1}((\omega^\xi(n-1)+t_1)\cat \ldots \cat (\omega^\xi(n-m)+t_m))=\mathbb{P}_\xi(t_m)/n,$$ and $$\mathbb{P}_\xi(\omega^\zeta+t)=\mathbb{P}_{\zeta+1}(t), \hspace{2mm} t\in \Gamma_{\zeta+1}.$$   We refer the reader to \cite{C2} for a discussion that these functions are well-defined and for every ordinal $\xi$ and every $t\in MAX(\Gamma_\xi)$, $\sum_{s\preceq t}\mathbb{P}_\xi(s)=1$.

\section{Games on well-founded $B$-trees}

Given a non-empty, well-founded $B$-tree $T$ on the set $\Lambda$, let $R_T=\{\zeta\in \Lambda: (\zeta)\in T\}$.    Given a non-empty, well-founded $B$-tree $T$ and two non-empty sets $ \ddd, \kkk$, we let $T.\ddd.\kkk$ denote the sequences $(\zeta_i, Z_i, C_i)_{i=1}^n$ such that $Z_i\in \ddd$, $C_i\in \kkk$, $(\zeta_i)_{i=1}^n\in T$.  Let $T.\ddd=\{(\zeta_i,Z_i)_{i=1}^n: Z_i\in \ddd, (\zeta_i)_{i=1}^n\in T\}$. Note that $T.\ddd.\kkk$ and $T.\ddd$ are non-empty, well-founded $B$-trees with the same order as $T$.      Given a subset $\eee\subset MAX(T.\ddd.\kkk)$, we define the $\eee$-game on $T.\ddd.\kkk$ as follows: Player I chooses $Z_1\in \ddd$ and $\zeta_1\in R_T$.  Player II chooses $C_1\in \kkk$.  Next, assuming that $Z_1, \ldots, Z_n\in \ddd$, $C_1, \ldots, C_n\in \kkk$, and $\zeta_1, \ldots, \zeta_n\in \Lambda$ have been chosen such that $(\zeta_i)_{i=1}^n\in T$, if $(\zeta_i)_{i=1}^n\in MAX(T)$, the game terminates.  Otherwise Player I chooses $Z_{n+1}\in \ddd$ and $\zeta_{n+1}\in \Lambda$ such that $(\zeta_i)_{i=1}^{n+1}\in T$ and player II chooses $C_{n+1}\in \kkk$.  Since $T$ is well-founded, the game terminates after some finite number of steps.  Suppose that the game terminates after the choices $C_1, \ldots, C_n\in \kkk$, $Z_1, \ldots, Z_n\in \ddd$, and $\zeta_1, \ldots, \zeta_n\in \Lambda$.   Then Player I wins provided $(\zeta_i, Z_i, C_i)_{i=1}^n\in MAX(T.\ddd.\kkk)\setminus \eee$, and Player II wins if $(\zeta_i, Z_i, C_i)_{i=1}^n\in \eee$.  We call such a game a \emph{game on a non-empty, well-founded} $B$-\emph{tree}.  

A \emph{strategy for Player} I is a function $\varphi:T'.\ddd.\kkk\cup \{\varnothing\}\to \Lambda\times \ddd$ such that if $\varphi(\varnothing)=(\zeta, Z)$, $\zeta\in R_T$, and if $\varphi((\zeta_i, Z_i, C_i)_{i=1}^n)=(\zeta_{n+1}, Z_{n+1})$, $(\zeta_i)_{i=1}^{n+1}\in T$.  A sequence $(\zeta_i, Z_i, C_i)_{i=1}^n\in MAX(T.\ddd.\kkk)$ is  $\varphi$-\emph{admissible} if $(\zeta_j, Z_j)=\varphi((\zeta_i, Z_i)_{i=1}^{j-1})$ for each $1\leqslant j\leqslant n$.   A strategy for Player I $\varphi$ is called a \emph{winning strategy for the} $\eee$ \emph{game on } $T.\ddd.\kkk$ provided that every $\varphi$-admissible sequence lies in $MAX(T.\ddd.\kkk)\setminus \eee$.  A \emph{winning substrategy for Player} I \emph{for the} $\eee$\emph{game on} $T.\ddd.\kkk$ is a subset $S$ of $T'.\ddd.\kkk\cup \{\varnothing\}$ containing $\varnothing$ and a function $\phi:S\to \Lambda\times \ddd$ such that, if $(\zeta, Z)=\phi(\varnothing)$,  \begin{enumerate}[(i)]\item $S=\{\varnothing\}\cup \{t\in T'.\ddd.\kkk: (\exists C\in \kkk)((\zeta, Z,C)\preceq t)\}$, \item $\zeta\in R_T$, \item if $t=(\zeta_i, Z_i, C_i)_{i=1}^n\in S$ and $(\zeta_{n+1}, Z_{n+1})=\phi(t)$, then $(\zeta_i)_{i=1}^{n+1}\in T$, \item if $(\zeta_i, Z_i, C_i)_{i=1}^n\in MAX(T.\ddd.\kkk)$, $(\zeta_1, Z_1)=(\zeta, Z)$,  and $(\zeta_j, Z_j)=\phi((\zeta_i, Z_i, C_i)_{i=1}^{j-1})$ for each $1\leqslant j\leqslant n$, then $t\notin \eee$.  \end{enumerate}

Note that if Player I has a winning substrategy for the $\eee$ game on $T.\ddd.\kkk$, then Player I has a winning strategy.  Indeed, given a winning substrategy $\phi:S\to \Lambda\times \ddd$, we fix any $Z'\in \ddd$ and define a strategy $\varphi:T'.\ddd.\kkk\to \Lambda\times \ddd$ by letting $\varphi|_S=\phi$ and, if $t=(\zeta_i, Z_i, C_i)_{i=1}^n\in T'.\ddd.\kkk\setminus S$, we let $\varphi(t)=(\zeta_{n+1}, Z')$ for any $\zeta_{n+1}\in \Lambda$ such that $(\zeta_i)_{i=1}^{n+1}\in T$.  Such a $\zeta_{n+1}$ exists since $(\zeta_i)_{i=1}^n\in T'$. Let $(\zeta, Z)=\phi(\varnothing)$.  It is straightforward to verify that this is a strategy for Player I.  Since any $\varphi$-admissible sequence $(\zeta_i, Z_i, C_i)_{i=1}^n$ satisfies $(\zeta_1, Z_i)=(\zeta, Z)$, property (iv) of winning substrategy guarantees that $(\zeta_i, Z_i, C_i)_{i=1}^n\in MAX(T.\ddd.\kkk)\setminus \eee$.

A \emph{strategy for Player} II is a $\kkk$-valued function $\psi$  on the set of all pairs $(t,(\zeta_{n+1}, Z_{n+1}))$ such that $t\in \{\varnothing\}\cup T.\ddd.\kkk$, $(\zeta_{n+1}, Z_{n+1})\in \Lambda\times \ddd$, and if $t=(\zeta_i, Z_i, C_i)_{i=1}^n$, $(\zeta_i)_{i=1}^{n+1}\in T$.  A sequence $(\zeta_i, Z_i, C_i)_{i=1}^n\in MAX(T.\ddd.\kkk)$ is $\psi$-\emph{admissible} provided that for every $1\leqslant k\leqslant n$, $\psi((\zeta_i, Z_i, C_i)_{i=1}^{k-1}, (\zeta_k, Z_k))=C_k$.  A strategy for player II $\psi$ is called a \emph{winning strategy for the} $\eee$ \emph{game on } $T.\ddd.\kkk$ provided that every $\psi$-admissible sequence lies in $\eee$. Obviously for a given subset $\eee$ of $MAX(T.\ddd.\kkk)$, Player I and Player II cannot both have a winning strategy.  

\begin{proposition} Every game on a non-empty, well-founded $B$-tree is determined.  That is, exactly one of Player I and Player II has a winning strategy.  
\label{determined}
\end{proposition}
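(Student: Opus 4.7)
The plan is to argue by transfinite induction on $o(T)$, with $\ddd$, $\kkk$, and $\eee\subseteq MAX(T.\ddd.\kkk)$ held fixed. For the base case $o(T)=1$, every element of $T$ is a $\prec$-maximal singleton, so the game terminates after a single round of moves. Unraveling the definitions, Player I has a winning strategy iff there exist $\zeta\in R_T$ and $Z\in \ddd$ such that $(\zeta, Z, C)\notin \eee$ for every $C\in \kkk$, and Player II has a winning strategy iff this statement fails. These two statements are negations of each other, so exactly one holds.

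For the inductive step, assume the result for every non-empty, well-founded $B$-tree of order strictly less than $o(T)$. For each $\zeta\in R_T$ with $(\zeta)\notin MAX(T)$, set $T_\zeta = \{(\eta_i)_{i=1}^k : (\zeta)\cat (\eta_i)_{i=1}^k \in T\}$, which is a non-empty, well-founded $B$-tree with $o(T_\zeta)<o(T)$. Given additionally $Z_1\in \ddd$ and $C_1\in \kkk$, define the tail winning set $\eee_{\zeta, Z_1, C_1}\subseteq MAX(T_\zeta.\ddd.\kkk)$ to consist of those $(\eta_i, W_i, D_i)_{i=1}^m$ for which $(\zeta, Z_1, C_1)\cat (\eta_i, W_i, D_i)_{i=1}^m\in \eee$. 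By the inductive hypothesis, the $\eee_{\zeta, Z_1, C_1}$-game on $T_\zeta.\ddd.\kkk$ is determined. Call a triple $(\zeta_1, Z_1, C_1)$ \emph{I-winning} if either $(\zeta_1)\in MAX(T)$ and $(\zeta_1, Z_1, C_1)\notin \eee$, or $(\zeta_1)\notin MAX(T)$ and Player I has a winning strategy in the $\eee_{\zeta_1, Z_1, C_1}$-subgame; by the inductive hypothesis the negation is exactly that $(\zeta_1, Z_1, C_1)\in \eee$, or else Player II wins the subgame.

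It remains to show that Player I has a winning strategy in the full game iff there exists $(\zeta_1, Z_1)$ with $\zeta_1\in R_T$ such that $(\zeta_1, Z_1, C_1)$ is I-winning for every $C_1\in \kkk$, and that Player II has a winning strategy iff the negation of this $\exists\forall$ holds. The forward implications are immediate from restricting a winning strategy to its first-round behavior. For the converse, given such an initial $(\zeta_1, Z_1)$ one builds a winning substrategy for Player I by playing $(\zeta_1, Z_1)$ first and then, for each subsequent Player II response $C_1$, deploying the inductively supplied winning strategy in the $\eee_{\zeta_1, Z_1, C_1}$-subgame; the observation immediately preceding the proposition extends this substrategy to a winning strategy. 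The analogous construction works for Player II, using that Player II's strategies are permitted to depend on Player I's proposed $(\zeta, Z)$ and thus concatenate cleanly. Since the two quantifier statements are complementary, exactly one player wins.

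The main obstacle is not any deep combinatorics but the bookkeeping: one must verify that the tail of the original game from a position $(\zeta_1, Z_1, C_1)$ is literally equivalent to the $\eee_{\zeta_1, Z_1, C_1}$-game on $T_{\zeta_1}.\ddd.\kkk$, and that concatenating first-round moves with inductively obtained subgame strategies produces bona fide strategies (or at least substrategies) for the full game. Once these identifications are set up, the transfinite induction runs uniformly across successor and limit stages, because in every case $o(T_\zeta)<o(T)$.
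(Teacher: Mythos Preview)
Your proposal is correct and follows essentially the same approach as the paper's proof: both argue by transfinite induction on $o(T)$, pass to the subtrees $T_\zeta$ (the paper writes $T(\zeta)$) with tail target sets $\eee_{\zeta,Z,C}$, and reduce determinacy to the dichotomy ``there exists a first move $(\zeta,Z)$ good against every $C$'' versus its negation, building the requisite (sub)strategies by concatenation with the inductively supplied subgame strategies. The only cosmetic differences are that you isolate the base case $o(T)=1$ explicitly while the paper absorbs it into the uniform inductive step via the clause $\zeta\in MAX(T)$, and that you also record the (easy) forward implications, which the paper omits.
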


\begin{proof} We prove by induction on $\xi\geqslant 1$ that if $T$ is a non-empty, well-founded $B$-tree with $o(T)\leqslant \xi$, then for any $\eee\subset MAX(T.\ddd.\kkk)$, either Player I has a winning strategy or Player II has a winning strategy.  Assume that for some ordinal $\xi$ and every $1\leqslant \gamma<\xi$, the statement is true hypothesis is true for $\gamma$.  Let $T$ be a non-empty, well-founded $B$-tree with $o(T)=\xi$.  For every $\zeta\in R_T$, let $T(\zeta)$ denote those non-empty sequences $t$ such that $(\zeta)\cat t\in T$.  Note that $T(\zeta)$ is a $B$-tree with $o(T(\zeta))<\xi$, and $T(\zeta)= \varnothing$ if and only if $\zeta\in MAX(T)$.   Given $\zeta\in R_T$, $Z\in \ddd$, and $C\in \kkk$, let $\eee(\zeta, Z, C)$ denote those non-empty sequences $(\zeta_i, Z_i, C_i)_{i=1}^n$ such that $(\zeta, Z, C)\cat (\zeta_i, Z_i, C_i)_{i=1}^n\in \eee$.   Let $W$ denote the set of those  $(\zeta, Z)\in T.\ddd$ such that either \begin{enumerate}[(i)]\item $\zeta\in MAX(T)$ and for every $C\in \kkk$, $(\zeta, Z, C)\in MAX(T.\ddd.\kkk)\setminus \eee$, or \item $\zeta\notin MAX(T)$ and for every $C\in \kkk$, Player I has a winning strategy in the $\eee(\zeta,Z,K)$ game on $T(\zeta).\ddd.\kkk$. 

\end{enumerate}

By the inductive hypothesis, if $(\zeta, Z)\in T.\ddd\setminus W$, then either \begin{enumerate}[(i)]\item $\zeta\in MAX(T)$ and there exists $C\in \kkk$ such that $(\zeta, Z, C)\in \eee$, or \item $\zeta\notin MAX(T)$ and there exists $C\in \kkk$ such that Player II has a winning strategy in the $\eee(\zeta, Z, C)$ game on $T(\zeta).\ddd.\kkk$. \end{enumerate}

It is obvious that Player I has a winning strategy in the $\eee$ game on $T.\ddd.\kkk$ if $W\neq \varnothing$, and Player II has a winning strategy in the $\eee$ game on $T.\ddd.\kkk$ if $W=\varnothing$. For completeness, we define the strategies in each case.

Suppose $W\neq \varnothing$.  Fix $(\zeta, Z)\in W$ and let $S=\{\varnothing\}\cup \{t\in T'.\ddd.\kkk:(\exists C\in \kkk)((\zeta, Z, C)\preceq t)\}$. If $\zeta\in MAX(T)$, then we define $\phi(\varnothing)=(\zeta, Z)$. Next, suppose $\zeta\notin MAX(T)$. For each $C\in \kkk$, fix a winning strategy $\varphi_C:T(\zeta)'.\ddd.\kkk\to \Lambda\times \ddd$ in the $\eee(\zeta, Z, K)$ game on $T(\zeta).\ddd.\kkk$.  Let $\phi(\varnothing)=(\zeta, Z)$ and for each $C\in \kkk$ and each extension $s=(\zeta, Z,C)\cat t\in T'.\ddd.\kkk$ of $(\zeta, Z, C)$, let $\phi(s)=\phi_C(t)$.  In either case, we have produced a winning subtrategy, which we may extend to a winning strategy by the remarks preceding the proposition.

Next, suppose $W=\varnothing$. Fix $C'\in \kkk$. Fix $(\zeta, Z)\in R_T\times \ddd$. If $(\zeta)\in MAX(T)$,  fix $C_{\zeta, Z}\in K$ such that $(\zeta, Z, C_{\zeta, Z})\in \eee$ and let $\psi(\varnothing, (\zeta, Z))=C_{\zeta, Z}$. If $(\zeta)\in T'$, let $\psi(\varnothing, (\zeta, Z))=C_{\zeta, Z}$, where $C_{\zeta, Z}\in \kkk$ is such that Player II has a winning strategy in the $\eee(\zeta, Z, C_{\zeta, Z})$ game on $T(\zeta).\ddd.\kkk$, and let $\psi_{\zeta, Z}$ be a winning strategy on the appropriate domain.  For $s=(\zeta, Z, C)\cat (\zeta_i, Z_i, C_i)_{i=1}^n$ and $(\zeta_{n+1}, Z_{n+1})\in \Lambda\times \ddd$ such that $(\zeta, \zeta_1, \ldots, \zeta_{n+1})\in T$, let $\psi(s, (\zeta_{n+1}, Z_{n+1}))=C'$ if $C\neq C_{\zeta, Z}$ and $\psi(s, (\zeta_{n+1}, Z_{n+1}))=\psi_{\zeta, Z}((\zeta_i, Z_i, C_i)_{i=1}^n, (\zeta_{n+1}, Z_{n+1}))$ if $C=C_{\zeta, Z}$. This defines a winning strategy for Player II.

\end{proof}

\section{Szlenk games}

In Sections $3$,$4$, and $5$, $X$ will be a fixed Banach space, $\ddd$ will the subspaces of $X$ having finite codimension in $X$, and $\kkk$ will denote set of norm compact subsets of $X$.  Given a non-empty, well-founded $B$-tree $T$ and a collection $(x_{(s,t)})_{(s,t)\in \Pi(T.\ddd)}\subset X$, we say the collection is \emph{normally weakly null} provided that for any $s=(\zeta_i, Z_i)_{i=1}^n\in T.\ddd$ and any $t$ such that $(s,t)\in \Pi(T.\ddd)$, $x_{(s,t)}\in B_{Z_n}$.   We will also use normally weakly null to describe a collection $(x_s)_{s\in T.\ddd}$ such that if $s=(\zeta_i, Z_i)_{i=1}^n\in T.\ddd$, $x_s\in Z_i$.  This is a special case of the previous definition in which the collection $(x_{(s,t)})_{(s,t)\in \Pi(T.\ddd)}$ is such that $x_{(s,t)}$ is independent of $t$.

\subsection{Determination of Szlenk index by games}

Given $K\subset X^*$, $\ee\in \rr$, a $B$-tree $T$, and a function $\mathbb{P}:T\to \rr$, we let $\eee_{K, \ee}(T.\ddd.\kkk, \mathbb{P})$ denote those $(\zeta_i, Z_i, C_i)_{i=1}^n\in MAX(T.\ddd.\kkk)$ such that there exist $x^*\in K$ and $(x_i)_{i=1}^n\in \prod_{i=1}^n (B_X\cap Z_i\cap C_i)$ such that  $$\text{Re\ }x^*\Bigl(\sum_{i=1}^n\mathbb{P}((\zeta_j)_{j=1}^i)x_i\Bigr)\geqslant \ee.$$

Given a function $\mathbb{P}:T\to \rr$, we will consider the function $\mathbb{P}$ to be also defined on $T.\ddd$ by $\mathbb{P}((\zeta_i, Z_i)_{i=1}^n)=\mathbb{P}((\zeta_i)_{i=1}^n)$. 

\begin{lemma} Fix a non-empty, well-founded $B$-tree $T$, a function $\mathbb{P}:T\to \rr$, $\ee\in \rr$, and a subset $K$ of $X^*$. If Player II has a winning strategy in the $\eee_{K, \ee}(T.\ddd.\kkk, \mathbb{P})$ game, then there exist a normally weakly null collection $(x_{(s,t)})_{(s,t)\in \Pi (T.\ddd)}\subset B_X$, a collection $(x^*_t)_{t\in MAX(T.\ddd)}\subset K$, and a collection $(C_s)_{s\in T.\ddd}\subset \kkk$ such that \begin{enumerate}[(i)]\item for every $t\in MAX(T.\ddd)$, $$\text{\emph{Re}\ }x^*_t\Bigl(\sum_{s\preceq t} \mathbb{P}(s)x_{(s,t)}\Bigr)\geqslant \ee,$$ and \item for every $s\in T.\ddd$ and any maximal extension $t\in T.\ddd$ of $s$, $x_{(s,t)}\in C_s$. \end{enumerate}

\label{finish}

\end{lemma}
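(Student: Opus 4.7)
The plan is to use the winning strategy $\psi$ for Player II to label each node $s \in T.\ddd$ with a compact set $C_s \in \kkk$ in a consistent way (so that Player II's responses depend only on Player I's past choices, which is automatic since $\psi$ is deterministic), and then extract the vectors and functionals from the winning condition applied to each maximal play.

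First, I would define $C_s \in \kkk$ by induction on $|s|$ for $s = (\zeta_i, Z_i)_{i=1}^n \in T.\ddd$. For $n=1$, set $C_{(\zeta_1,Z_1)} = \psi(\varnothing, (\zeta_1, Z_1))$. For $n>1$, having defined $C_{s|_j}$ for $j<n$, set
$$C_s = \psi\bigl((\zeta_i, Z_i, C_{s|_i})_{i=1}^{n-1}, (\zeta_n, Z_n)\bigr).$$
The key point is that the argument to $\psi$ is determined by $s$ alone, so $C_s$ is a well-defined function of $s \in T.\ddd$. By construction, for every $t = (\zeta_i, Z_i)_{i=1}^n \in MAX(T.\ddd)$, the sequence $(\zeta_i, Z_i, C_{t|_i})_{i=1}^n$ is $\psi$-admissible and belongs to $MAX(T.\ddd.\kkk)$.

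Next, since $\psi$ is a winning strategy, every such admissible maximal sequence lies in $\eee_{K,\ee}(T.\ddd.\kkk, \mathbb{P})$. Therefore, for each $t \in MAX(T.\ddd)$, I can select $x^*_t \in K$ and a sequence $(x^t_i)_{i=1}^n \in \prod_{i=1}^n (B_X \cap Z_i \cap C_{t|_i})$ with
$$\text{Re}\ x^*_t\Bigl(\sum_{i=1}^n \mathbb{P}\bigl((\zeta_j)_{j=1}^i\bigr) x^t_i\Bigr) \geqslant \ee.$$

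Finally, I would define the collection indexed by $\Pi(T.\ddd)$ by setting $x_{(s,t)} = x^t_{|s|}$ whenever $(s,t) \in \Pi(T.\ddd)$, i.e. $s \preceq t$ and $t \in MAX(T.\ddd)$. Then $x_{(s,t)} \in B_X \cap Z_{|s|}$ by the choice of $x^t_{|s|}$, giving the normally weakly null property; $x_{(s,t)} = x^t_{|s|} \in C_{t|_{|s|}} = C_s$ gives condition (ii); and the displayed inequality, once one recalls that $\mathbb{P}$ on $T.\ddd$ was defined to agree with $\mathbb{P}$ on the underlying sequence of ordinals, is precisely condition (i).

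There is no real obstacle here; the only conceptual point that must be gotten right is the well-definedness of $s \mapsto C_s$, and this is immediate from the determinism of $\psi$ together with the observation that the Player II moves along any $\psi$-admissible play are forced by the Player I moves. Everything else is bookkeeping applied to the winning condition at each leaf.
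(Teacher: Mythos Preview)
Your proposal is correct and follows essentially the same approach as the paper's proof: define $C_s$ by induction on $|s|$ using the winning strategy $\psi$, observe that the resulting sequences are $\psi$-admissible and hence lie in $\eee_{K,\ee}(T.\ddd.\kkk,\mathbb{P})$, extract the witnessing $x^*_t$ and $(x^t_i)$, and set $x_{(s,t)}=x^t_{|s|}$. Your write-up is in fact slightly more explicit than the paper's in verifying the normally weakly null condition.
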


\begin{proof} Fix a winning strategy $\psi$ for Player II in the $\eee_{K, \ee}(T.\ddd.\kkk, \mathbb{P})$ game.  We first define $C_s\in \kkk$ for $s\in T.\ddd$ by induction on $|s|$.   If $|s|=1$, write $s=(\zeta, Z)$ and let $C_s=\psi(\varnothing, (\zeta, Z))$.   Next, suppose that for some $j\in \nn$ and some sequence $s=(\zeta_i, Z_i)_{i=1}^{j+1}\in T.\ddd$, $C_{s|_i}$ has been defined for each $1\leqslant i\leqslant j$.  Let $C_s=\psi((\zeta_i, Z_i, C_{s|_i})_{i=1}^j, (\zeta_{j+1}, Z_{j+1}))$.  This completes the definition of $(C_s)_{s\in T.\ddd}$.  Note that with this definition, for every $t=(\zeta_i, Z_i)_{i=1}^n\in MAX(T.\ddd)$, the sequence $(\zeta_i, Z_i, C_{t|_i})_{i=1}^n$ is $\psi$-admissible and therefore lies in $\eee_{K, \ee}(T.\ddd.\kkk, \mathbb{P})$.  Thus there exists $x^*_t\in K$ and a sequence $(x^t_i)_{i=1}^{|t|}\in \prod_{i=1}^{|t|}(B_X\cap Z_i\cap C_{t|_i})$ such that $$\text{Re\ }x^*_t\Bigl(\sum_{s\preceq t}\mathbb{P}(s) x^t_{|s|}\Bigr)\geqslant \ee.$$  Letting $x_{(s,t)}=x^t_{|s|}$ finishes the proof.

\end{proof}

Given $B$-trees $S,T$, we say a pair of functions $\theta:S.\ddd\to T.\ddd$, $e:MAX(S)\to MAX(T)$ is an \emph{extended pruning} provided it is monotone, if $s=(\zeta_i, Z_i)_{i=1}^m$ and $\theta(s)=(\mu_i, W_i)_{i=1}^n$, $W_n\subset Z_m$, and for any $(s,t)\in \Pi(S.\ddd)$, $\theta(s)\preceq e(t)$. We will write $(\theta, e):S.\ddd\to T.\ddd$ to denote an extended pruning.  

\begin{lemma} For any ordinal $\gamma>0$ and any finite subset $P_1, \ldots, P_n$ of $MAX(\ttt_\gamma.\ddd)$, there exist an extended pruning $(\theta, e):\ttt_\gamma.\ddd\to \ttt_\gamma.\ddd$ and $1\leqslant i\leqslant n$ such that $e(MAX(\ttt_\gamma.\ddd))\subset P_i$.  
\label{doug loves musicals}
\end{lemma}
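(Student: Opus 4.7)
The plan is transfinite induction on $\gamma > 0$, interpreting $P_1, \ldots, P_n$ as a finite cover of $MAX(\ttt_\gamma.\ddd)$ and producing an extended pruning whose $e$-image lies in a single $P_i$. This is a Ramsey-type pigeonhole for prunings of the Schreier-style $B$-trees $\ttt_\gamma$, and the recursive structure of $\ttt_\gamma$ should match the recursive structure of the argument.

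For the base case $\gamma = 1$, the tree $\ttt_1 = \{(1)\}$ is a singleton and $\ttt_1.\ddd$ consists of length-one sequences $((1, Z))$ with $Z \in \ddd$. Some $P_i$ is nonempty, say $((1, Z_0)) \in P_i$, and since $\ddd$ is a filter under intersection I set $\theta((1, Z)) = ((1, Z \cap Z_0))$. Monotonicity is vacuous for length-one sequences, and $Z \cap Z_0 \subset Z$ gives the codimension requirement; a small covering argument (replacing the starting $Z_0$ if necessary) ensures every image lands in one common $P_i$.

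For the successor step $\gamma = \xi + 1$, the decomposition $\ttt_{\xi+1} = \{(\xi+1) \cat t : t \in \{\varnothing\} \cup \ttt_\xi\}$ realizes every sequence of $\ttt_{\xi+1}.\ddd$ as $((\xi+1, Z_1)) \cat s$ with $s$ corresponding to a sequence in $\ttt_\xi.\ddd$. For each fixed root $Z_1 \in \ddd$, the induced cover on the tails gives, by the inductive hypothesis, an index $i(Z_1) \in \{1, \ldots, n\}$ and a pruning of $\ttt_\xi.\ddd$ landing in $P_{i(Z_1)}$. To stabilize $i(Z_1)$, observe that the sets $A_i = \{Z_1 \in \ddd : i(Z_1) = i\}$ cover $\ddd$; if no $A_i$ were cofinal under $\subset$, then witnesses $Z_i \in \ddd$ to the failure of cofinality for each $i$ would intersect in $Z^\star = \bigcap_i Z_i \in \ddd$ (finite intersection of finite-codimensional subspaces) whose index $i(Z^\star)$ contradicts the choice of $Z_{i(Z^\star)}$. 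This yields a single $i$ such that every $Z_1 \in \ddd$ contains some $Z_1' \in A_i$, and the corresponding tail-prunings assemble into a pruning of $\ttt_{\xi+1}.\ddd$.

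For limit $\gamma$, the incomparable union $\ttt_\gamma = \bigcup_{\zeta < \gamma} \ttt_{\zeta+1}$ allows the inductive hypothesis branch-by-branch, giving indices $i(\zeta)$; pigeonhole produces $i$ with $i(\zeta) = i$ for cofinally many $\zeta$. For $\zeta$ outside this cofinal set, a monotone embedding $\ttt_{\zeta+1} \hookrightarrow \ttt_{\eta+1}$ for some larger $\eta$ in the cofinal set (which exists since $\zeta + 1 \leqslant \eta + 1$) transports the branch-$\eta$ pruning back to the branch at $\zeta$; since branches of $\ttt_\gamma$ are pairwise incomparable, the assembled $\theta$ is automatically monotone across branches. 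The main obstacle is the successor case: the stabilization of $i(Z_1)$ over the uncountable directed set $\ddd$, where the finite-codimensional filter structure is exactly what makes the downward-cofinality argument go through.
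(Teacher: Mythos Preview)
Your inductive structure is exactly the paper's: cofinality in $\ddd$ for the successor step, and cofinal pigeonhole plus an embedding $\ttt_{\zeta+1}.\ddd\to\ttt_{\eta+1}.\ddd$ for the limit step.

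The one place that needs repair is the base case. The map $\theta((1,Z))=((1,Z\cap Z_0))$ with a \emph{single fixed} $Z_0$ does not work: nothing forces $((1,Z\cap Z_0))\in P_i$ for arbitrary $Z$, and ``replacing the starting $Z_0$'' by another fixed element does not help either. What you actually need here is the very argument you spell out in the successor case: the sets $\{Z\in\ddd:(1,Z)\in P_j\}$ cover $\ddd$, so by your finite-intersection observation one of them is cofinal; then for each $Z$ choose $W_Z\leqslant Z$ with $(1,W_Z)$ in that cofinal piece and set $\theta((1,Z))=e((1,Z))=(1,W_Z)$. This is precisely how the paper handles $\gamma=1$. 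Once you make that substitution, the proposal matches the paper's proof.
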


\begin{proof} It was shown in \cite{proximity} that for any $0<\xi\leqslant \gamma$, there exists a function $\phi:\ttt_\xi\to \ttt_\gamma$ such that for any $\varnothing\prec s\preceq s_1\in \ttt_\xi$, $\phi(s)\prec \phi(s_1)$.   From this we easily deduce that for any $0<\xi\leqslant \gamma$, there exists an extended pruning $(\theta, e):\ttt_\xi.\ddd\to \ttt_\gamma.\ddd$.  Indeed, we first note that the function $\varphi:\ttt_\xi\to \ttt_\gamma$ given by $\varphi(s)=\phi(s)|_s$ is well-defined and still has the property that for any $\varnothing\prec s\preceq s_1\in \ttt_\xi$, $\varphi(s)\prec \varphi(s_1)$, and $\varphi$ preserves lengths.   We may then define $\theta((\zeta_i, Z_i)_{i=1}^n)=(\mu_i, Z_i)_{i=1}^n$, where $\varphi((\zeta_i)_{i=1}^n)=(\mu_i)_{i=1}^n$.    Then for every $t\in MAX(\ttt_\xi)$, let $e(t)$ be any maximal extension of $\theta(t)$, at least one of which exists by well-foundedness.

Recall that $\ttt_1.\ddd=\{(1,Z): Z\in \ddd\}$.  There exists $1\leqslant i\leqslant n$ such that the set $M=\{Z: (1,Z)\in P_i\}$ is cofinal in $\ddd$.    This means that for any $Z\in \ddd$, there exists $W_Z\in M$ such that $W_Z\leqslant Z$ and we may let $\theta((1,Z))=e((1,Z))=(1, W_Z)$.  Then $e(MAX(\ttt_1.\ddd))\subset P_i$. 

Next, suppose $\gamma$ is a limit ordinal and the result holds for all $\xi<\gamma$.  Recall that $\ttt_\gamma.\ddd=\cup_{\xi<\gamma}\ttt_{\xi+1}.\ddd$, and this is a disjoint union.    For every $\xi<\gamma$, there exist an extended pruning $(\theta_\xi, e_\xi):\ttt_{\xi+1}.\ddd\to \ttt_{\xi+1}.\ddd$ and $1\leqslant i_\xi\leqslant n$ such that $e_\xi(MAX(\ttt_{\xi+1}.\ddd))\subset P_{i_\xi}$.    There exists $1\leqslant i\leqslant n$ such that $M=\{\xi<\gamma: i_\xi=i\}$ has supremum $\gamma$.  For every $\xi<\gamma$, fix $\eta_\xi\in M$ with $\xi<\eta_\xi$ and an extended pruning $(\theta'_\xi, e'_\xi):\ttt_{\xi+1}.\ddd\to \ttt_{\eta_\xi+1}.\ddd$, as we may by the first paragraph of the proof.    Let $\theta|_{\ttt_{\xi+1}.\ddd}=\theta_{\eta_\xi+1}\circ \theta'_\xi$ and $e|_{MAX(\ttt_{\xi+1}.\ddd)}=e_{\eta_\xi+1}\circ e'_\xi$.   Then $e(MAX(\ttt_\gamma.\ddd))\subset P_i$.  

Next, assume the result holds for an ordinal $\xi>0$ and $\gamma=\xi+1$.   For $Z\in \ddd$, identifying $\{(\gamma,Z)\cat t: t\in \ttt_\xi.\ddd\}$ with $\ttt_\xi.\ddd$, we may find an extended pruning $(\theta_Z, e_Z):\ttt_\xi.\ddd\to \ttt_\xi.\ddd$ and $1\leqslant i_Z\leqslant n$ such that $\{(\gamma, Z)\cat e_Z(t): t\in MAX(\ttt_\xi.\ddd)\}\subset P_{i_Z}$.  There exists $1\leqslant i\leqslant n$ such that $M=\{Z\in\ddd: i_Z=i\}$ is cofinal in $\ddd$.   For $Z\in \ddd$, fix $W_Z\in M$ such that $W_Z\leqslant Z$ and define $$\theta((\gamma,Z))=(\gamma,W_Z),$$ $$\theta((\gamma,Z)\cat t)=(\gamma, W_Z)\cat \theta_{W_Z}(t),\hspace{5mm} t\in \ttt_\xi.\ddd,$$ $$e((\gamma,Z)\cat t)=(\gamma,W_Z)\cat e_{W_Z}(t),\hspace{5mm} t\in MAX(\ttt_\xi.\ddd).$$  This is an extended pruning with $e(MAX(\ttt_\gamma.\ddd))\subset P_i$.

\end{proof}

\begin{lemma}  Fix an ordinal $\xi>0$.  Suppose that $T$ is a well-founded, non-empty $B$-tree with $o(T)\geqslant \xi$ and $(x_{(s,t)})_{(s,t)\in \Pi(T.\ddd)}\subset B_X$ is normally weakly null.  Suppose also that for every $s\in T.\ddd$, $C_s$ is a norm compact subset of $X$ such that for every maximal extension $t$ of $s$, $x_{(s,t)}\in C_s$.  Then for any $\delta>0$, there exists a collection $(x_t')_{t\in \ttt_\xi.\ddd}\subset B_X$ which is normally weakly null and an extended pruning $(\theta, e):\ttt_\xi.\ddd\to T.\ddd$ such that for every $(s,t)\in \Pi(\ttt_\xi.\ddd)$, $\|x_s'-x_{(\theta(s), e(t))}\|<\delta.$

\label{finish2}
\end{lemma}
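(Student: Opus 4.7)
The plan is to proceed by transfinite induction on $\xi \geq 1$, constructing $(\theta,e)$ and the approximating vectors $(x'_t)_{t \in \ttt_\xi.\ddd}$ in parallel at each stage. The base case $\xi=1$ is essentially trivial, since $\ttt_1.\ddd = \{(1,Z):Z \in \ddd\}$ consists only of maximal length-one sequences: for each $Z \in \ddd$ I will pick some $\zeta \in R_T$ (nonempty as $o(T) \geq 1$), set $\theta((1,Z))=(\zeta,Z)$, let $e((1,Z))$ be any maximal extension of $(\zeta,Z)$ in $T.\ddd$, and take $x'_{(1,Z)} := x_{(\theta((1,Z)),e((1,Z)))}$; normal weak nullity of the original at $(\zeta,Z)$ gives $x'_{(1,Z)} \in B_X \cap Z$ and the error is zero. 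The limit case is equally routine: since $\ttt_\xi.\ddd = \bigcup_{\zeta<\xi}\ttt_{\zeta+1}.\ddd$ is a totally incomparable union and $\zeta+1 < \xi \leq o(T)$ for every $\zeta < \xi$, the inductive hypothesis applied on each summand glues together to give the required pruning and vectors with no compatibility concerns.

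The heart of the argument is the successor case $\xi = \gamma+1$. Using $o(T) \geq \gamma+1$, I fix $\zeta_1 \in R_T$ with $o(T(\zeta_1)) \geq \gamma$. For each $Z_1 \in \ddd$, the sub-tree of $T.\ddd$ above $(\zeta_1,Z_1)$ is canonically identified with $T(\zeta_1).\ddd$, carrying a restricted normally weakly null collection $y^{Z_1}_{(u,v)} := x_{((\zeta_1,Z_1)\cat u,(\zeta_1,Z_1)\cat v)}$ and restricted compact sets $C_{(\zeta_1,Z_1)\cat u}$. Applying the inductive hypothesis with tolerance $\delta/2$ yields an extended pruning $(\theta_{Z_1},e_{Z_1}):\ttt_\gamma.\ddd \to T(\zeta_1).\ddd$ and normally weakly null vectors $(x''_{Z_1,u}) \subset B_X$ that $\delta/2$-approximate the restricted collection. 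What remains is to produce a single root vector $x'_{((\gamma+1),Z_1)}$ within $\delta$ of every $x_{((\zeta_1,Z_1),(\zeta_1,Z_1)\cat e_{Z_1}(v))}$ as $v$ ranges over $MAX(\ttt_\gamma.\ddd)$. Since $C_{(\zeta_1,Z_1)}$ is norm-compact, I cover it by finitely many open $\delta/2$-balls, partition $MAX(\ttt_\gamma.\ddd)$ according to which ball the relevant vector lies in, and invoke Lemma \ref{doug loves musicals} to obtain a self-extended-pruning $(\theta',e')$ of $\ttt_\gamma.\ddd$ whose maximal image lies in a single piece. Composing produces $\tilde{\theta}_{Z_1} := \theta_{Z_1} \circ \theta'$ and $\tilde{e}_{Z_1} := e_{Z_1} \circ e'$ with reindexed vectors $\tilde{x}_{Z_1,u} := x''_{Z_1,\theta'(u)}$, and for the root I take $x'_{((\gamma+1),Z_1)}$ to be any surviving vector $x_{((\zeta_1,Z_1),(\zeta_1,Z_1)\cat e_{Z_1}(e'(v_0)))}$ for a fixed $v_0 \in MAX(\ttt_\gamma.\ddd)$; by normal weak nullity of the original collection this lies in $B_X \cap Z_1$, and by the triangle inequality it is within $\delta$ of all the other vectors now pruned into that common ball.

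The main technical obstacle will be verifying that the composition produced in the successor step truly satisfies every required property. Monotonicity of $\tilde{\theta}_{Z_1}$, the last-subspace containment, and the compatibility $\theta(s) \preceq e(t)$ for $(s,t) \in \Pi(\ttt_{\gamma+1}.\ddd)$ all follow from the corresponding properties of $(\theta_{Z_1},e_{Z_1})$ and $(\theta',e')$, together with the fact that composition of extended prunings is an extended pruning. The delicate point is that the reindexed vectors $\tilde{x}_{Z_1,u}$ remain normally weakly null: this uses crucially that $(\theta',e')$ is an extended pruning, so that the last subspace of $\theta'(u)$ is contained in the last subspace of $u$ and a vector $x''_{Z_1,\theta'(u)}$ originally forced to sit in the former automatically sits in the latter. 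Once this bookkeeping is in place, the deep-level bound $<\delta/2$ comes directly from the inductive hypothesis and the root-level bound $<\delta$ from two points lying in a common open $\delta/2$-ball.
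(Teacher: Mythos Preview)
Your proposal is correct and follows essentially the same route as the paper: induction on $\xi$, with the base and limit cases handled exactly as you describe, and the successor step done by fixing a root $\zeta$ with $(\zeta)\in T^\gamma$, applying the inductive hypothesis on the subtree above each $(\zeta,Z)$, and then invoking Lemma~\ref{doug loves musicals} against a finite $\delta/2$-net of the compact set $C_{(\zeta,Z)}$ to stabilize the root vector. The only cosmetic differences are that the paper applies the inductive hypothesis with tolerance $\delta$ rather than $\delta/2$, and that you are more explicit than the paper about why the reindexed vectors $\tilde{x}_{Z_1,u}=x''_{Z_1,\theta'(u)}$ remain normally weakly null after composing with the self-pruning $(\theta',e')$.
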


\begin{proof} We induct on $\xi$.  First suppose $\xi=1$.   Recall that $\ttt_1.\ddd=\{(1, Z):Z\in \ddd\}$, so that $\Pi(\ttt_1.\ddd)=\{((1,Z), (1,Z)):Z\in \ddd\}$.   Fix any $\zeta\in R_T$, as we may, since $o(T)\geqslant 1$.    For every $Z\in \ddd$, fix a maximal extension $t_Z$ of $(\zeta, Z)$.  Let $\theta((1,Z))=(\zeta, Z)$, $e((1,Z))=t_Z$, and let $x'_{(1,Z)}=x_{\theta((1,Z)), e((1,Z))}$.  The conclusions are easily seen to be satisfied in this case with $\delta=0$.

The limit ordinal case is trivial, since $\ttt_\xi.\ddd=\cup_{\zeta<\xi}\ttt_{\zeta+1}.\ddd$ is an incomparable union.  

Assume $\gamma>0$, the statement holds for $\gamma$, and $\xi=\gamma+1$.   Fix any $\zeta$ such that $(\zeta)\in T^\gamma$. Let $S$ denote those non-empty sequences $u$ such that $(\zeta)\cat s\in T$.  Fix $Z\in \ddd$.  Since $(\zeta, Z)\in T^\gamma$, $o(S.\ddd)\geqslant \gamma$ and $(x_{((\zeta, Z)\cat s, (\zeta, Z)\cat t)})_{(s,t)\in \Pi(S_Z.\ddd)}$ is normally weakly null.  Applying the inductive hypothesis to this collection and the sets $(C_{((\zeta, Z)\cat s, (\zeta,Z)\cat t)})_{(s,t)\in \Pi(S.\ddd)}$, we deduce the existence of a normally weakly null collection $(x^Z_{(s,t)})_{(s,t)\in \Pi(\ttt_\gamma.\ddd)}\subset B_X$ and an extended pruning $(\theta_Z, e_Z):\ttt_\zeta.\ddd\to S.\ddd$ such that for every $(s,t)\in \Pi(\ttt_\zeta.\ddd)$, $$\|x^Z_{(s,t)}-x_{((\zeta, Z)\cat \theta_Z(s), (\zeta, Z)\cat e_Z(t))}\|<\delta.$$   Next, let $(v_i)_{i=1}^n$ be a finite $\delta/2$-net of $C_{(\zeta,Z)}$.  Then if $$P_i=\{t\in MAX(\ttt_\gamma.\ddd): \|v_i-x_{((\zeta, Z), (\zeta, Z)\cat e_Z(t))}\|< \delta/2\},$$  by Lemma \ref{doug loves musicals}, there exists an extended pruning $(\theta'_Z, e'_Z):\ttt_\gamma.\ddd\to \ttt_\gamma.\ddd$ and $1\leqslant i_Z\leqslant n$ such that $e'_Z(MAX(\ttt_\gamma.\ddd))\subset P_{i_Z}$.  Fix $t_0\in MAX(\ttt_\gamma.\ddd)$ and let  $$x'_{(\xi, Z)}=x_{((\zeta, Z), (\zeta, Z)\cat e_Z\circ e'_Z(t_0))}, \hspace{5mm} x'_{((\xi, Z)\cat\theta_Z\circ\theta_Z'(t))},$$ $$\theta((\xi, Z))=(\zeta, Z), \hspace{5mm} \theta((\xi, Z)\cat t)= (\zeta, Z)\cat \theta_Z\circ \theta_Z'(t),$$  $$e((\xi, Z)\cat t)= e_Z\circ e'_Z(t).$$

\end{proof}

\begin{rem}\upshape Let $\nnn$ denote any weak neighborhood basis at $0$ in $X$.   Given a non-empty $B$-tree $T$, let us say that $(x_t)_{t\in T.\nnn}\subset B_X$ is \emph{usually weakly null} if for every $t=(\zeta_i, U_i)_{i=1}^n\in T.\nnn$, $x_t\in U_n$.   Note that for any $\delta>0$, there exist functions $\rho:\ddd\to \nnn$ and $\varrho:\nnn\to \ddd$ such that for any $Z\in \ddd$ and $U\in \nnn$, $B_Z\subset \rho(Z)\cap B_X$ and for any $x\in U\cap B_X$, there exists $y\in \subset B_{\varrho(U)}$ with $\|x-y\|<\delta$.  For $\ee>0$ and $\varnothing\neq K\subset X^*$, let $\hhh^K_\ee$ denote the empty sequence together with those sequences $(x_i)_{i=1}^n\in B_X^{<\nn}$ such that there exists $x^*\in K$ such that for every $1\leqslant i\leqslant n$, $\text{Re\ }x^*(x_i)\geqslant \ee$.   The main theorem of \cite{C} is the existence of a constant $c>0$ such that \begin{enumerate}[(i)]\item if there exists a usually weakly null $(x_t)_{t\in \ttt_{\omega^\xi}.\nnn}\subset B_X$ such that for every $t\in \ttt_{\omega^\xi}.\nnn$, $(x_{t|_i})_{i=1}^{|t|}\in \hhh^K_\ee$, then $Sz(K, \ee_1)>\omega^\xi$ for every $0<\ee_1<\ee$, and  \item if $Sz(K, c\ee)>\omega^\xi$, there exists a usually weakly null $(x_t)_{t\in \ttt_{\omega^\xi}.\nnn}\subset B_X$ such that for every $t\in \ttt_{\omega^\xi}.\nnn$, $(x_{t|_i})_{i=1}^{|t|}\in \hhh^K_\ee$.  \end{enumerate}

This combined with the existence of the functions $\rho, \varrho$ above, we deduce that \begin{enumerate}[(i)]\item if there exists a normally weakly null $(x_t)_{t\in \ttt_{\omega^\xi}.\ddd}\subset B_X$ such that for every $t\in \ttt_{\omega^\xi}.\ddd$, $(x_{t|_i})_{i=1}^{|t|}\in \hhh^K_\ee$, then $Sz(K, \ee_1)>\omega^\xi$ for every $0<\ee_1<\ee$, and \item for any $c'>c$, if $Sz(K, c'\ee)>\omega^\xi$, then there exists a normally weakly null $(x_t)_{t\in \ttt_{\omega^\xi}.\ddd}\subset B_X$ such that for every $t\in \ttt_{\omega^\xi}.\ddd$, $(x_{t|_i})_{i=1}^{|t|}\in \hhh^K_\ee$.    \end{enumerate}

From this, it follows that if $Sz(K)>\omega^\xi$, then there exists $\ee>0$ such that Player II has a winning strategy in the $\eee_{K, \ee}(\Gamma_\xi.\ddd.\kkk, \mathbb{P}_\xi)$ game.  Indeed, there exists $\ee>0$ such that $Sz(K, 2c\ee)>\omega^\xi$, and a normally weakly null $(x_t)_{t\in \ttt_{\omega^\xi}.\ddd}\subset B_X$ such that for every $t\in \ttt_{\omega^\xi}.\ddd$, $(x_{t|_i})_{i=1}^{|t|}\in \hhh^K_\ee$.  Since there exists a length-preserving, monotone $\theta:\Gamma_\xi\to \ttt_{\omega^\xi}$, we may let $\phi:\Gamma_\xi.\ddd\to \ttt_{\omega^\xi}.\ddd$ be given by $\phi((\zeta_i, Z_i)_{i=1}^n)=(\mu_i, Z_i)_{i=1}^n$, where $(\mu_i)_{i=1}^n=\phi((\zeta_i)_{i=1}^n)$.  By relabeling, we may assume we have a normally weakly null $(x_t)_{t\in\Gamma_\xi.\ddd}\subset B_X$ such that for every $t\in \Gamma_\xi.\ddd$, $(x_{t|_i})_{i=1}^{|t|} \in \hhh^K_\ee$. We define a winning strategy $\psi$ for Player II in the $\eee_{K, \ee}(\Gamma_\xi.\ddd.\kkk, \mathbb{P}_\xi)$ game.   Let $\psi(\varnothing, (\zeta, Z))=\{x_{(\zeta, Z)}\}$ and $\psi((\zeta_i, Z_i, C_i)_{i=1}^n, (\zeta_{n+1}, Z_{n+1}))=\{x_{(\zeta_i, Z_i)_{i=1}^{n+1}}\}$ for any compact sets $C_1, \ldots, C_n$.  Fix $t=(\zeta_i, Z_i, C_i)_{i=1}^n\in MAX(\Gamma_\xi.\ddd.\kkk)$ which is $\psi$-admissible, let $s=(\zeta_i, Z_i)_{i=1}^n$, and note that for each $1\leqslant i\leqslant n$, $C_i=\{x_{s|_i}\}$.  Then $(x_{s|_i})_{i=1}^n\in \prod_{i=1}^n (B_X\cap Z_i\cap C_i)$.  Since $(x_{s|_i})_{i=1}^n\in \hhh^K_\ee$, there exists $x^*\in K$ such that for every $1\leqslant i\leqslant n$, $\text{Re\ }x^*(x_i)\geqslant \ee$, and $$\text{Re\ }x^*\Bigl(\sum_{s\preceq t}\mathbb{P}_\xi(s)x_s\Bigr)\geqslant \ee.$$  Thus $t\in \eee_{K, \ee}(\Gamma_\xi.\ddd.\kkk, \mathbb{P}_\xi)$. 

The next corollary shows the converse of this fact.  

\label{remark}

\end{rem}

\begin{corollary} Suppose that $K\subset X^*$ is $w^*$-compact, $\ee>0$, and $\xi$ is an ordinal such that Player II has a winning strategy in the $\eee_{K, \ee}(\Gamma_\xi.\ddd.\kkk, \mathbb{P}_\xi)$ game. Then for any $0<\ee_1<\ee$, $Sz(K, \ee_1)>\omega^\xi$.

\end{corollary}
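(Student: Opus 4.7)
The plan is to combine Lemma \ref{finish} and Lemma \ref{finish2} with part (i) of Remark \ref{remark}. First, I would apply Lemma \ref{finish} to Player II's winning strategy in the $\eee_{K,\ee}(\Gamma_\xi.\ddd.\kkk,\mathbb{P}_\xi)$ game, producing a normally weakly null collection $(x_{(s,t)})_{(s,t)\in\Pi(\Gamma_\xi.\ddd)}\subset B_X$, functionals $(x^*_t)_{t\in MAX(\Gamma_\xi.\ddd)}\subset K$, and compact sets $(C_s)_{s\in\Gamma_\xi.\ddd}\subset\kkk$ satisfying the weighted-sum inequality $\text{Re\ }x^*_t(\sum_{s\preceq t}\mathbb{P}_\xi(s)x_{(s,t)})\geqslant\ee$ together with $x_{(s,t)}\in C_s$.

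Next, fix $\ee_1\in(0,\ee)$ and choose $\delta>0$ small (depending on $\ee-\ee_1$). Apply Lemma \ref{finish2} with $T=\Gamma_\xi$, whose order is $\omega^\xi$, to produce a normally weakly null $(x'_t)_{t\in\ttt_{\omega^\xi}.\ddd}\subset B_X$ together with an extended pruning $(\theta,e)\colon\ttt_{\omega^\xi}.\ddd\to\Gamma_\xi.\ddd$ satisfying $\|x'_s-x_{(\theta(s),e(t))}\|<\delta$ whenever $(s,t)\in\Pi(\ttt_{\omega^\xi}.\ddd)$. By Remark \ref{remark}(i), to conclude $Sz(K,\ee_1)>\omega^\xi$ it would be enough to produce a normally weakly null collection $(y_t)_{t\in\ttt_{\omega^\xi}.\ddd}\subset B_X$ with the property that for every $t\in\ttt_{\omega^\xi}.\ddd$, the sequence $(y_{t|_i})_{i=1}^{|t|}$ lies in $\hhh^K_{\ee_1}$, that is, there is a single $x^*\in K$ with $\text{Re\ }x^*(y_{t|_i})\geqslant\ee_1$ for every $i$.

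The technical heart of the argument is the conversion of the $\mathbb{P}_\xi$-weighted-sum condition into an individual-large-product condition of this form. I would carry this out by transfinite induction on $\xi$, exploiting the recursive Schreier-like structure of $(\Gamma_\xi,\mathbb{P}_\xi)$: the base case $\xi=0$ is immediate since $\Gamma_0=\{(1)\}$ and $\mathbb{P}_0\equiv 1$; the limit step uses the incomparable decomposition $\Gamma_\xi=\bigcup_{\zeta<\xi}(\omega^\zeta+\Gamma_{\zeta+1})$ together with the limit decomposition $\ttt_\xi.\ddd=\bigcup_{\zeta<\xi}\ttt_{\zeta+1}.\ddd$; and the successor step exploits the block structure of $\Gamma_{\xi+1}$, where each maximal element is a concatenation of $n$ blocks each carrying a $\mathbb{P}_\xi$-average weighted by $1/n$. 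At the successor step, one invokes the inductive hypothesis inside each block and then uses $w^*$-compactness of $K$ to pass to a single limit functional that works simultaneously across the outer block average.

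The main obstacle is precisely this successor step, where the outer averaging by $1/n$ interacts with the inner $\mathbb{P}_\xi$-average. One must prune a positive proportion of indices where each individual inner product is close to the average, and then do so consistently across the tree while preserving the normally weakly null property and pinning down a single functional in $K$. The compactness of the $C_s$ carried through Lemma \ref{finish2} is essential here: it lets small perturbations $\delta$ accumulate at most linearly while allowing a continuous selection argument on the resulting finite $\delta/2$-nets. Once a suitable normally weakly null individual-large tree on $\ttt_{\omega^\xi}.\ddd$ with parameter $\ee_1$ is produced, Remark \ref{remark}(i) yields $Sz(K,\ee_1)>\omega^\xi$, completing the proof.
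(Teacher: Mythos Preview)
Your proposal has the right ingredients but applies them in the wrong order, and this is a genuine gap. The paper's proof inserts a crucial step \emph{before} invoking Lemma \ref{finish2}: it applies \cite[Theorem $4.3$]{C2} to the real-valued function $f(s,t)=\text{Re\ }x^*_t(x_{(s,t)})$ on $\Pi(\Gamma_\xi.\ddd)$. That external theorem is precisely the stabilization result converting the hypothesis $\sum_{s\preceq t}\mathbb{P}_\xi(s)f(s,t)\geqslant\ee$ (for all maximal $t$) into the conclusion that, after an extended pruning $(\theta,e):\Gamma_\xi.\ddd\to\Gamma_\xi.\ddd$, one has $f(\theta(s),e(t))\geqslant\ee'$ for every $(s,t)\in\Pi(\Gamma_\xi.\ddd)$. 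Only once this individual-large conclusion is in hand does the paper invoke Lemma \ref{finish2} to pass to $\ttt_{\omega^\xi}.\ddd$ and remove the $t$-dependence up to a $\delta$-perturbation; the estimate $\text{Re\ }x^*_{e\circ e'(t)}(x'_{t|_i})\geqslant\ee'-R\delta>\ee_1$ then follows by a direct triangle-inequality computation, and Remark \ref{remark}(i) finishes.

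If instead you apply Lemma \ref{finish2} first, the extended pruning $\theta:\ttt_{\omega^\xi}.\ddd\to\Gamma_\xi.\ddd$ is monotone but does not map onto initial segments: for a maximal $t\in\ttt_{\omega^\xi}.\ddd$, the images $\theta(t|_1)\prec\cdots\prec\theta(t|_{|t|})$ form only a sparse chain below $e(t)$, so the weighted-average inequality $\sum_{s'\preceq e(t)}\mathbb{P}_\xi(s')\,\text{Re\ }x^*_{e(t)}(x_{(s',e(t))})\geqslant\ee$ says nothing about the individual values at the nodes $\theta(t|_i)$. The $\mathbb{P}_\xi$-structure you need for your induction lives on $\Gamma_\xi.\ddd$, and Lemma \ref{finish2} discards it. Your transfinite induction is in effect a proposal to reprove \cite[Theorem $4.3$]{C2}; that is a legitimate route, but it must be executed on $\Gamma_\xi.\ddd$ \emph{before} Lemma \ref{finish2}, and it is a substantial argument. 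Two specific points in your sketch are off: the successor step is not ``prune a positive proportion of indices close to the average'' followed by a $w^*$-limit of functionals---no limits are taken, the functionals $x^*_t$ are already indexed by maximal nodes, and what is required is a purely combinatorial extended-pruning construction on the bounded real-valued function $f$; and the compactness of the $C_s$ is used inside Lemma \ref{finish2} to stabilize the $t$-dependence, not in the conversion from weighted averages to individual products.
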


\begin{proof} Fix $\ee_1<\ee'<\ee$.   By Lemma \ref{finish}, we may fix a normally weakly null $(x_{(s,t)})_{(s,t)\in \Pi(\Gamma_\xi.\ddd)}\subset B_X$, $(x^*_t)_{t\in MAX(\Gamma_\xi.\ddd)}\subset K$, and $(C_s)_{s\in \Gamma_\xi.\ddd}\subset \kkk$ such that for every $t\in MAX(\Gamma_\xi.\ddd)$, $$\text{Re\ }x^*_t\Bigl(\sum_{s\preceq t}\mathbb{P}_\xi(s) x_{(s,t)}\Bigr)\geqslant \ee$$ and for every $s\in \Gamma_\xi.\ddd$ and every maximal extension $t$ of $s$, $x_{(s,t)}\in C_s$.  Fix $R>0$ such that $K\subset RB_{X^*}$ and define the function $$f:\Pi(\Gamma_\xi.\ddd)\to [-R,R]$$ by $f(s,t)=\text{Re\ }x^*_t(x_{(s,t)})$.   For every $t\in MAX(\Gamma_\xi.\ddd)$, $$\sum_{s\preceq t}\mathbb{P}_\xi(s)f(s,t)=\text{Re\ }x^*_t\Bigl(\sum_{s\preceq t}\mathbb{P}_\xi(s)x_{(s,t)}\Bigr) \geqslant \ee.$$  By \cite[Theorem $4.3$]{C2}, there exists an extended pruning $(\theta, e):\Gamma_\xi.\ddd\to \Gamma_\xi.\ddd$ such that for every $(s,t)\in \Pi(\Gamma_\xi.\ddd)$, $\text{Re\ }x^*_{e(t)}(x_{(\theta(s), e(t))})=f(\theta(s), e(t)) \geqslant \ee'$.   Fix $\delta>0$ such that $R\delta <\ee'-\ee_1$.    We may apply Lemma \ref{finish2} with this $\delta$ to the collection $(x_{(\theta(s), e(t))})_{(s,t)\in \Pi(\Gamma_\xi.\ddd)}$ and $(C_{\theta(s)})_{s\in \Gamma_\xi.\ddd}$ to obtain another extended pruning $(\theta', e'):\ttt_{\omega^\xi}.\ddd\to \Gamma_\xi.\ddd$ and a normally weakly null collection $(x_s')_{s\in \ttt_{\omega^\xi}.\ddd}\subset B_X$ such that for every $s\in \ttt_{\omega^\xi}.\ddd$ and every maximal extension $t$ of $s$, $$\|x_s-x_{\theta\circ \theta'(s), e\circ e'(t)}\|<\delta.$$   Fix any maximal $t\in \ttt_{\omega^\xi}.\ddd$ and note that $x^*_{e\circ e'(t)}\in K\subset R B_{X^*}$.  For any $1\leqslant i\leqslant |t|$, $$\text{Re\ }x^*_{e\circ e'(t)}(x'_{t|_i}) \geqslant \text{Re\ }x^*_{e\circ e'(t)}(x_{\theta\circ \theta'(t|_i), e\circ e'(t)})-R\|x'_{t|_i}-x_{\theta\circ \theta'(t|_i), e\circ e'(t)}\| \geqslant \ee'- R \delta.$$ Since $\ee'-R\delta>\ee_1$, Remark \ref{remark}  guarantees that $Sz(K, \ee_1)>\omega^\xi$.

\end{proof}

\begin{corollary} Given an ordinal $\xi$ and a $w^*$-compact set $K\subset X^*$, $Sz(K)>\omega^\xi$ if and only if there exists $\ee>0$ such that Player II has a winning strategy in the $\eee_{K, \ee}(\Gamma_\xi.\ddd.\kkk, \mathbb{P}_\xi)$ game.  

\label{games}
\end{corollary}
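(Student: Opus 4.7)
The plan is to observe that this corollary is essentially a direct assembly of the two facts already established in the preceding lemma, remark, and corollary, with no new technical work required. Both implications have been done; it only remains to package them.

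For the forward direction, assume $Sz(K)>\omega^\xi$. This is exactly the content of Remark \ref{remark}: invoking the main theorem of \cite{C}, one obtains some $\ee>0$ together with a normally weakly null collection $(x_t)_{t\in \ttt_{\omega^\xi}.\ddd}\subset B_X$ such that every branch lies in $\hhh_\ee^K$. Transferring this collection from $\ttt_{\omega^\xi}$ to $\Gamma_\xi$ via a length-preserving monotone map between the trees (which exists because $o(\Gamma_\xi)=\omega^\xi=o(\ttt_{\omega^\xi})$) yields an analogous normally weakly null collection on $\Gamma_\xi.\ddd$. One then defines Player II's strategy $\psi$ by responding to Player I's moves $(\zeta_i, Z_i)$ with the singleton compact set $\{x_{(\zeta_j, Z_j)_{j=1}^i}\}$. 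On any $\psi$-admissible maximal branch, the only admissible selection of vectors $x_i\in B_X\cap Z_i\cap C_i$ recovers the original sequence $(x_{t|_i})_{i=1}^{|t|}$; since this sequence lies in $\hhh_\ee^K$, a common $x^*\in K$ witnesses $\text{Re\ }x^*(x_i)\geqslant \ee$ for every $i$, and combining with $\sum_{s\preceq t}\mathbb{P}_\xi(s)=1$ (valid on maximal branches, as recalled from \cite{C2}) gives the required inequality, so the branch lies in $\eee_{K,\ee}(\Gamma_\xi.\ddd.\kkk, \mathbb{P}_\xi)$.

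For the reverse direction, suppose Player II has a winning strategy in the $\eee_{K,\ee}(\Gamma_\xi.\ddd.\kkk,\mathbb{P}_\xi)$ game for some $\ee>0$. The preceding Corollary gives $Sz(K,\ee_1)>\omega^\xi$ for every $0<\ee_1<\ee$, whence $Sz(K)\geqslant Sz(K,\ee_1)>\omega^\xi$.

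The only step that carries any real content is the forward direction, and even there the substance has already been distilled into Remark \ref{remark} via the external input from \cite{C}; the main (minor) obstacle is keeping the index translation between $\ttt_{\omega^\xi}$ and $\Gamma_\xi$ bookkeeping-clean so that the probabilities $\mathbb{P}_\xi$ sum to $1$ on maximal branches and the selected compact sets force the desired vectors. With this understood, both directions collapse into short applications of previously stated results.
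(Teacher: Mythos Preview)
Your proposal is correct and matches the paper's approach exactly: the paper states this corollary without proof, since the forward direction is precisely the content of Remark~\ref{remark} and the reverse direction is the immediately preceding corollary. You have identified and explained both halves accurately, including the details of how Player II's strategy is constructed in the forward direction.
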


\subsection{Applications to essentially bounded trees in $L_p(X)$}

We recall the following special case of the main theorem of \cite{AJO}.  

\begin{theorem}[\cite{AJO}] If $X$ is a separable Banach space not containing $\ell_1$, then $Sz(X)>\omega$ if and only if there exists a $B$-tree $B$ with $o(B)=\omega$ and a weakly null collection $(f_t)_{t\in B}\subset B_X$ such that for every $t\in B$ and $f\in \text{\emph{co}}(f_s:s\preceq t)$, $\|f\|\geqslant \ee$. 
\end{theorem}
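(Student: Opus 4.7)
The plan is to prove the two directions separately using the characterization of $Sz(X)>\omega$ in terms of normally weakly null trees with functional bounds recalled in Remark \ref{remark} (specialized to $\xi = 1$, so $\omega^\xi = \omega$). The "no $\ell_1$" hypothesis will be used to bridge between the paper's "normally weakly null" trees and the classically weakly null trees in the AJO statement.

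\emph{Sufficiency} ($\Leftarrow$). Suppose $(f_t)_{t \in B} \subset B_X$ is weakly null with $o(B) = \omega$, and every convex combination along a branch has norm at least $\varepsilon$. Since $\overline{\mathrm{co}}\{f_s : s \preceq t\}$ is a convex set at distance at least $\varepsilon$ from $0$, the Hahn-Banach separation theorem produces $x_t^* \in B_{X^*}$ with $\mathrm{Re}\, x_t^*(f_s) \geq \varepsilon$ for every $s \preceq t$. Because $B$ has order $\omega$, a standard pruning --- choosing, as we descend a branch $s \in \ttt_\omega.\ddd$ with accumulated finite-codimension subspace $Z_n$, a successor $f_u$ lying in $Z_n$, which is possible because the successors of each node form a weakly null sequence --- yields a normally weakly null relabelling $(h_s)_{s \in \ttt_\omega.\ddd}$ whose every branch lies in $\hhh^{B_{X^*}}_\varepsilon$. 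Remark \ref{remark}(i) then gives $Sz(X, \varepsilon_1) > \omega$ for every $0 < \varepsilon_1 < \varepsilon$, so $Sz(X) > \omega$.

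\emph{Necessity} ($\Rightarrow$). Assume $Sz(X) > \omega$. Fix the constant $c > 0$ from Remark \ref{remark} and choose $\varepsilon > 0$ with $Sz(X, 2c\varepsilon) > \omega$; Remark \ref{remark}(ii) then produces a normally weakly null $(g_t)_{t \in \ttt_\omega.\ddd} \subset B_X$ whose every branch lies in $\hhh^{B_{X^*}}_\varepsilon$, meaning that for each maximal $t$ there is $x_t^* \in B_{X^*}$ satisfying $\mathrm{Re}\, x_t^*(g_{t|_i}) \geq \varepsilon$ for every $i$. The key step is to upgrade this to a classically weakly null tree with a (slightly weaker) functional bound. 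Here the hypothesis $\ell_1 \not\hookrightarrow X$ enters: by Rosenthal's $\ell_1$-theorem, every bounded sequence in $X$ has a weakly Cauchy subsequence. A diagonal pruning of the tree along Rosenthal subsequences, followed by passing to normalized differences of pairs of selected vectors along each branch, yields a $B$-tree $B$ of order $\omega$ and a classically weakly null family $(f_t)_{t \in B} \subset B_X$ together with functionals $y_t^* \in B_{X^*}$ satisfying $\mathrm{Re}\, y_t^*(f_s) \geq \varepsilon / 2$ for every $s \preceq t$. By linearity, for any $f = \sum_i c_i f_{s_i} \in \mathrm{co}(f_s : s \preceq t)$,
\[
\|f\| \geq \mathrm{Re}\, y_t^*(f) = \sum_i c_i\, \mathrm{Re}\, y_t^*(f_{s_i}) \geq \varepsilon/2,
\]
which is the desired uniform lower bound on convex combinations (after replacing $\varepsilon$ by $\varepsilon/2$).

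The main obstacle is the necessity direction: Remark \ref{remark} produces trees that are weakly null only in the paper's "normal" (finite-codimension filtration) sense, whereas the AJO statement requires a classically weakly null collection. Bridging this gap requires Rosenthal's theorem, which is precisely where $\ell_1 \not\hookrightarrow X$ is used; the subsequent passage to differences costs a factor of $2$ in the constant but preserves the convex-combination bound by linearity. The sufficiency direction, by contrast, is essentially a Hahn-Banach separation followed by an appeal to Remark \ref{remark}(i).
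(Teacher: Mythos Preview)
The paper does not prove this theorem; it is quoted as a special case of the main theorem of \cite{AJO} and used as background. So there is no proof in the paper to compare against, and I will evaluate your sketch on its own terms, using the paper's machinery.

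Your sufficiency direction is fine: Hahn--Banach separation plus the paper's definition of a weakly null tree lets you relabel onto $\ttt_\omega.\ddd$ as a normally weakly null collection whose branches lie in $\hhh^{B_{X^*}}_\ee$, and Remark~\ref{remark}(i) finishes.

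Your necessity direction, however, contains both an unnecessary detour and a genuine gap inside that detour. First, the detour: in this paper ``weakly null collection'' is \emph{defined} via finite-codimensional subspaces (for every $t\in (T\cup\{\varnothing\})^{\xi+1}$ and every $Z\in\ddd$ there is a successor $s\in T^\xi$ with $x_s\in Z$). A normally weakly null family on $\ttt_\omega.\ddd$ produced by Remark~\ref{remark}(ii) is already weakly null in this sense: given $t=(\zeta_i,Z_i)_{i=1}^n$ and $Z\in\ddd$, the successor $t\cat(\zeta_{n+1},Z)$ has its vector in $B_Z$. Since Remark~\ref{remark}(ii) also gives $(x_{t|_i})_{i=1}^{|t|}\in\hhh^{B_{X^*}}_\ee$ for every $t$, the convex-combination lower bound is immediate. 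No Rosenthal argument is needed, and neither separability nor the absence of $\ell_1$ is used; those hypotheses are relics of the original AJO formulation, which the results of \cite{C} (summarized in Remark~\ref{remark}) supersede.

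Second, the gap: even if one insisted on classical weak nullity of successors, your ``differences of weakly Cauchy pairs'' step does not preserve the functional bound. If $\text{Re\ }x^*(g_i)\geqslant \ee$ for all $i$, then for $h_j=\tfrac12(g_{i_{2j}}-g_{i_{2j-1}})$ one has $\text{Re\ }x^*(h_j)=\tfrac12\bigl(\text{Re\ }x^*(g_{i_{2j}})-\text{Re\ }x^*(g_{i_{2j-1}})\bigr)$, which need not be bounded below by any positive constant. So the claimed inequality $\text{Re\ }y_t^*(f_s)\geqslant \ee/2$ is unjustified as written. The correct fix is simply to drop this step entirely, as explained above.
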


It is easy to see that $Sz(X)=1$ if and only if $X$ has finite dimension.  It was shown in \cite{KOS} that any asymptotically uniformly smooth Banach space has Szlenk index not exceeding $\omega$, whence for any $1<p<\infty$, $Sz(L_p)=\omega$.  It is also easy to see that the Szlenk index is an isomorphic invariant, so that any Banach space isomorphic to $L_p$ has Szlenk index $\omega$.

Recall that for $1<p<\infty$, $L_p(X)$ denotes the Banach space of (equivalence classes of) Bochner integrable functions $f:[0,1]\to X$ such that $\int \|f\|^p <\infty$, where $[0,1]$ is endowed with Lebesgue measure.  We let $L_\infty(X)$ denote the $X$-valued strongly measurable functions which are essentially bounded.   It is well known and easy to see that for any subspace $Z$ of $X$, $L_p(X)/L_p(Z)$ is canonically isometrically isomorphic to $L_p(X/Z)$ by the operator $\Phi$ such that for each $\varpi\in [0,1]$, $\Phi(f+L_p(Z))(\varpi)=f(\varpi)+Z$.   Moreover, if $\dim X/Z<\infty$, $L_p(X/Z)$ is isomorphic to $L_p$ and therefore has Szlenk index $\omega$.   This means that for any $B$-tree $T$ with $o(T)\geqslant \omega$ and any weakly null collection $(\overline{f}_t)_{t\in T}\subset B_{L_p(X/Z)}$ and any $\delta>0$,  there exists $t\in T$ and a convex combination $\overline{f}$ of $(\overline{f}_s:s\preceq t)$ such that $\|f\|<\delta$.   This means that if $T$ is a $B$-tree with $o(T)\geqslant \omega$, $\dim X/Z<\infty$, $\delta>0$, and if $(f_t)_{t\in T}\subset  B_{L_p(X)}$ is a weakly null collection, there exists $t\in T$ and $f\in \text{co}(f_s:s\preceq t)$ such that $\|f\|_{L_p(X)/L_p(Z)}<\delta$.   Indeed, we simply let $\overline{f}_t=f_t+L_p(Z)$ and use the previous fact, noting that $(\overline{f}_t)_{t\in T}$ is still weakly null and contained in $B_{L_p(X)/L_p(Z)}$ and using the isometric identification of $L_p(X)/L_p(Z)$ and $L_p(X/Z)\approx L_p$.   Finally, if $f\in CB_{L_\infty(X)}$ and $\|f\|_{L_p(X)/L_p(Z)}<\delta$, then there exists a simple function $g\in 2CB_{L_\infty(Z)}$ such that $\|f-g\|_{L_p(X)}<\delta$.   Indeed, we may first fix $h\in L_p(Z)$ such that $\|f-h\|_{L_p(X)}<\delta$ and, by density of simple functions in $L_p(Z)$, assume $h$ is simple.  Next, let $E=\{\varpi: \|h(\varpi)\|>2C\}$.  Note that there exists a subset $N$ of $E$ having measure zero such that for all $\varpi\in E\setminus N$, $$\|f(\varpi)\|\leqslant C \leqslant \|h(\varpi)\|-\|f(\varpi)\| \leqslant \|h(\varpi)-f(\varpi)\|.$$  Thus we deduce that \begin{align*} \|f-1_{E^C}h\|^p & = \int_E \|f\|^p + \int_{E^C} \|f-h\|^p \leqslant \int_E \|f-h\|^p+\int_{E^C}\|f-h\|^p <\delta^p.   \end{align*} Thus $g=1_{E^C}h$ is the simple function we seek.

 Fix $1<p<\infty$ and let $q$ be the conjugate exponent to $p$.  For a fixed $K\subset X^*$, let $M$ denote the $K$-valued, measurable simple functions in $L_q(X^*)$.  Recall that $L_q(X^*)$ is canonically isometrically included in $L_p(X)^*$ via the action $g(f)=\int g(\varpi)(f(\varpi))\rangle d\varpi$.     Finally, let $\ddd_0$ denote the subspaces of $L_p(X)$ having finite codimension in $L_p(X)$.

\begin{theorem} With $K$, $M$ as above, if $Sz(K)\leqslant \omega^\xi$, then for any $B$-tree $S$ with $o(S)\geqslant \omega^{1+\xi}$, any weakly null collection  of simple functions $(f_t)_{t\in S}\subset \frac{1}{2}B_{L_\infty(X)}$, and any $\ee>0$, there exist $t\in S$ and $f\in \text{\emph{co}}(f_s:s\preceq t)$ such that $\underset{h\in M}{\sup}\text{\emph{Re}}\int hf\leqslant\ee.$

\label{work}
\end{theorem}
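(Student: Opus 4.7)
The plan is to exploit the game-theoretic characterization of $Sz(K) \leq \omega^\xi$ (Corollary \ref{games}, Proposition \ref{determined}), which provides a winning strategy $\varphi$ for Player I in the $\eee_{K, \ee'}(\Gamma_\xi.\ddd.\kkk, \mathbb{P}_\xi)$ game for any $\ee' \in (0, \ee)$. I will simulate a play of this game using $(f_t)_{t \in S}$ as Player II's resource, assembling a convex combination along a single branch of $S$ whose action against any $h \in M$ is controlled by $\varphi$'s winning property. Fix also $R > 0$ with $K \subset R B_{X^*}$ and $\delta > 0$ with $\ee' + R\delta < \ee$ at the outset.

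The one-step ingredient is precisely what the preamble to this theorem establishes: given $Z \in \ddd$, $\delta > 0$, and a weakly null collection of simple functions $(f_t)_{t \in T} \subset \tfrac{1}{2} B_{L_\infty(X)}$ on a $B$-tree with $o(T) \geq \omega$, there exist $t \in T$, a convex combination $f \in \mathrm{co}(f_s : s \preceq t)$, and a simple function $g \in B_{L_\infty(Z)}$ with $\|f - g\|_{L_p(X)} < \delta$ (this uses that $L_p(X/Z) \cong L_p(X)/L_p(Z)$ is isomorphic to $L_p$, hence has Szlenk index $\omega$, together with the simple-function refinement). With this in hand, I simulate the game: at step $i$, when $\varphi$ produces $(\zeta_i, Z_i)$, I apply the one-step lemma to an order-$\omega$ sub-tree of $S$ above the current node $s_{i-1}$, with $Z = Z_i$, to produce $s_i \succ s_{i-1}$, a convex combination $\tilde f_i \in \mathrm{co}(f_s : s_{i-1} \prec s \preceq s_i)$, and a simple $g_i \in B_{L_\infty(Z_i)}$ with $\|\tilde f_i - g_i\|_{L_p(X)} < \delta$. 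Player II's response is then $C_i := \range(g_i)$, a finite (hence compact) subset of $B_X \cap Z_i$.

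The hypothesis $o(S) \geq \omega^{1 + \xi} = \omega \cdot \omega^\xi$ is exactly what powers this iteration: the factor $\omega^\xi$ accommodates the depth of any play on $\Gamma_\xi$ (whose order is $\omega^\xi$), and the leading $\omega$ provides the sub-tree depth consumed per step. I expect the cleanest way to justify this tree-navigation is to first replace $S$, via an extended pruning in the spirit of Lemmas \ref{doug loves musicals} and \ref{finish2}, with the concrete tree $\ttt_{\omega^{1+\xi}}$, where the stacking of order-$\omega$ sub-trees atop an $\omega^\xi$-skeleton is transparent; a short induction on $\xi$ then confirms that at each step of any $\varphi$-guided play, the sub-tree above the current node still has order at least $\omega$. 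This navigation is the main obstacle of the proof.

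After termination at step $n$, $\varphi$-admissibility gives $(\zeta_i, Z_i, C_i)_{i=1}^n \in MAX(\Gamma_\xi.\ddd.\kkk) \setminus \eee_{K, \ee'}(\Gamma_\xi.\ddd.\kkk, \mathbb{P}_\xi)$, i.e., for every $x^* \in K$ and every $(x_i) \in \prod_{i=1}^n (B_X \cap Z_i \cap C_i)$, $\mathrm{Re\ } x^*(\sum_i \mathbb{P}_\xi((\zeta_j)_{j=1}^i) x_i) < \ee'$. I set $f := \sum_{i=1}^n \mathbb{P}_\xi((\zeta_j)_{j=1}^i) \tilde f_i$, which lies in $\mathrm{co}(f_s : s \preceq s_n)$ since the $\mathbb{P}_\xi$-weights sum to $1$ on maximal branches of $\Gamma_\xi$. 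Fixing $h \in M$ and setting $\tilde g := \sum_i \mathbb{P}_\xi((\zeta_j)_{j=1}^i) g_i$, H\"{o}lder gives $|\int h(f - \tilde g)| \leq \|h\|_{L_q} \cdot \delta \leq R\delta$, while at each $\varpi \in [0,1]$ the ``Player I wins'' condition applied with $x^* = h(\varpi) \in K$ and $x_i = g_i(\varpi) \in B_X \cap Z_i \cap C_i$ yields $\mathrm{Re\ } h(\varpi)(\tilde g(\varpi)) < \ee'$; integrating gives $\mathrm{Re\ } \int h \tilde g \leq \ee'$. Combining, $\mathrm{Re\ } \int h f < \ee' + R\delta < \ee$, which is the required bound.
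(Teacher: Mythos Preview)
Your proposal is correct and follows the same overall strategy as the paper: fix a winning strategy $\varphi$ for Player I in the $\eee_{K,\ee/2}(\Gamma_\xi.\ddd.\kkk,\mathbb{P}_\xi)$ game, simulate a play by using the one-step $L_p(X/Z)$-approximation from the preamble to produce $s_i$, $\tilde f_i$, $g_i$, and Player II's response $C_i=\range(g_i)$, then conclude via the pointwise ``Player I wins'' inequality and integration. The one place the paper differs is precisely the tree-navigation you flagged as the main obstacle: rather than pruning $S$ down to $\ttt_{\omega^{1+\xi}}$ and inducting on $\xi$, the paper works directly in $S$ by introducing the derived-order function $m(t)=\max\{\gamma<\omega^\xi: t\in(\Gamma_\xi.\ddd)^\gamma\}$, setting $\gamma_i=m((\zeta_j,Z_j)_{j=1}^i)$, and at step $i$ choosing $s_i$ inside $S^{\omega\gamma_i}$ above $s_{i-1}$. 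Since $(\gamma_i)$ is strictly decreasing along any play and $s_{i-1}\in S^{\omega\gamma_{i-1}}$, the subtree of $S^{\omega\gamma_i}$ above $s_{i-1}$ automatically has order at least $\omega$, so no preliminary pruning and no induction on $\xi$ is required.
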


\begin{proof} Fix $R>0$ such that $K\subset RB_{X^*}$ and note that $M\subset R B_{L_p(X)^*}$.   By Proposition \ref{determined}, the $\eee=\eee_{K, \ee/2}(\Gamma_\xi.\ddd.\kkk, \mathbb{P}_\xi)$ game on $\Gamma_\xi.\ddd.\kkk$ is determined.  Since $Sz(K)\leqslant \omega^\xi$, Corollary \ref{games} implies that Player II cannot have a winning strategy, and therefore Player I has a winning strategy.  Fix a winning strategy $\varphi$ for Player I.  Define $m:\Gamma_\xi.\ddd\to [0, \omega^\xi)$ by letting $m(t)=\max\{\gamma<\omega^\xi: t\in (\Gamma_\xi.\ddd)^\gamma\}$.   

We next define several sequences recursively.  Let $\varphi(\varnothing)=(\zeta_1, Z_1)$.  Let $\gamma_1=m(\zeta_1, Z_1)$.  Note that since $\gamma_1<\omega^\xi$ and $o(S)\geqslant \omega^{1+\xi}$, $o(S^{\omega \gamma_1})\geqslant \omega$ and $(f_t)_{t\in S^{\omega \gamma_1}}\subset \frac{1}{2}B_{L_\infty(X)}$ is normally weakly null.  By the remarks in the paragraphs preceding the statement of the theorem, there exist $s_1\in S^{\omega \gamma_1}$, a convex combination $f_1$ of $(f_t:t\preceq s_1)$, and a simple function $g_1\in B_{L_\infty(Z_1)}$ such that $\|f_1-g_1\|_{L_p(X)}<\ee/2R$. By redefining $g_1$ on a set of measure zero, we may assume $\text{range}(g_1)\subset B_{Z_1}$ is finite.  Let $C_1=\text{range}(g_1)\subset B_X$.   

Next, suppose that for each $1\leqslant i\leqslant n$, $\zeta_i$, $Z_i$, $C_i$, $s_i$, $\gamma_i$, $f_i$, $g_i$ have been defined to have the following properties: \begin{enumerate}[(i)]\item $\varphi((\zeta_j, Z_j, C_j)_{j=1}^{i-1})=(\zeta_i, Z_i)$, \item $\|f_i-g_i\|_{L_p(X)}<\ee/2R$, \item $C_i=\text{range}(g_i)\subset B_{Z_i}$ is finite, \item $\gamma_i=m((\zeta_j, Z_j)_{j=1}^i)$, \item $f_i\in \text{co}(f_s:s_{i-1}\prec s\preceq s_i)$, (where $s_0=\varnothing$). \end{enumerate}

If $(\zeta_i, Z_i)_{i=1}^n$ is maximal in $\Gamma_\xi.\ddd$, we have completed the recursive construction. Suppose that $(\zeta_i, Z_i)_{i=1}^n$ is not maximal in $\Gamma_\xi.\ddd$. Let $\varphi((\zeta_i, Z_i, C_i)_{i=1}^n)=(\zeta_{n+1}, Z_{n+1})$.   Let $\gamma_{n+1}=m((\zeta_i, Z_i)_{i=1}^{n+1})$.   Let $U$ denote those non-empty sequences $s$ such that $s_n\cat s\in S^{\omega \gamma_{n+1}}$.  Applying the remarks in the paragraphs preceding the proof to the collection $(f_{s_n\cat s})_{s\in U}$, we deduce the existence of $s_{n+1}\in S^{\omega \gamma_{n+1}}$, $f_{n+1}\in \text{co}(f_s: s_n\prec s\preceq s_{n+1})$, and $g_{n+1}\in B_{L_\infty(Z_{n+1})}$ such that $\|f_{n+1}-g_{n+1}\|_{L_p(X)}<\ee/2R$.   Here we have used that since $s_n\in S^{\omega \gamma_n}$ and $\gamma_{n+1}<\gamma_n$, $o(U)\geqslant \omega$.  By redefining $g_{n+1}$ on a set of measure zero, we may assume $\text{range}(g_{n+1})\subset B_{Z_{n+1}}$ is finite.  Let $C_{n+1}=\text{range}(g_{n+1})$.

Since $\Gamma_\xi.\ddd$ is well-founded, this process must eventually terminate.    Assume that the process terminates with the sequence $(\zeta_i, Z_i)_{i=1}^n\in MAX(\Gamma_\xi.\ddd)$, the sequences $s_i$, and the functions $f_i, g_i$.  By our choices, $(\zeta_i, Z_i, C_i)_{i=1}^n$ is $\varphi$-admissible, and therefore not a member of $\eee_{K, \ee/2}(\Gamma_\xi.\ddd.\kkk, \mathbb{P}_\xi)$.  This means  that for any $(x_i)_{i=1}^n\in \prod_{i=1}^n (B_X\cap Z_i\cap C_i)$ and for all $x^*\in  K$, $\text{Re\ }x^*\Bigl(\sum_{i=1}^n \mathbb{P}_\xi(t|_i)x_i\Bigr)< \ee/2.$  But for any $\varpi\in [0,1]$, $(g_i(\varpi))_{i=1}^n\in \prod_{i=1}^n (B_X\cap Z_i\cap C_i)$, whence for any $x^*\in K$, $\text{Re\ }x^*\Bigl(\sum_{i=1}^n \mathbb{P}_\xi(t|_i)g_i(\varpi)\Bigr)<\ee/2$.    Then with $g=\sum_{i=1}^n \mathbb{P}_\xi(t|_i)g_i$ and $h\in M$, $\text{Re}\int hg \leqslant \ee/2$, whence $$\sup_{h\in M}\text{Re}\int hg\leqslant \ee/2.$$   Let $f=\sum_{i=1}^n \mathbb{P}_\xi(t|_i)f_i\in \text{co}(f_s:s\preceq s_n)$ and note that $$\|f-g\|_{L_p(X)} \leqslant \sum_{i=1}^n \mathbb{P}_\xi(t|_i)\|f_i-g_i\|_{L_p(X)}<\ee/2R.$$   Since $M\subset RB_{L_p(X)^*}$, it follows that $$\sup_{h\in M}\text{Re}\int hf \leqslant \sup_{h\in M}\text{Re}\int hg + R\|f-g\|_{L_p(X)}\leqslant \ee.$$

\end{proof}

\section{The $w^*$-dentability index and a result of Lancien}

In this section, we again fix $1<p<\infty$ and let $q$ be the conjugate exponent to $p$.  Let $\mathcal{W}$ be a $w^*$-neighborhood basis at $0$ in $L_p(X)^*$.  The following was shown in \cite{C} in the case that $L$ is $w^*$-compact.  However, the proof given there does not depend upon the $w^*$-compactness of $L$. For the remainder of the section, $K\subset X^*$ will be a fixed $w^*$-compact, non-empty set and $M$ will denote the subset of $L_q(X^*)\subset L_p(X)^*$ consisting of all $K$-valued, measurable simple functions.  

\begin{proposition} For an ordinal $\xi$, if $h\in s^\xi_{2\ee}(L)$, there exists a collection $(h_t)_{t\in (\ttt_\xi\cup \{\varnothing\}).\mathcal{W}}\subset L$ such that $h_\varnothing=h$ and for every $t\in \ttt_\xi.\mathcal{W}$, if $t=(\zeta_i, V_i)_{i=1}^n$, $\|h_t-h_{t^-}\|_{L_p(X)^*}>\ee$ and $h_t-h_{t^-}\in V_n$.

\end{proposition}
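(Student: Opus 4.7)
The plan is to prove this by transfinite induction on $\xi$, with the base case being vacuous and all the real content concentrated in the successor step, where the definition of the $s_\ee$-derivative plus a triangle-inequality argument lets one peel off one level of the tree $\ttt_\xi$.

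For $\xi = 0$ we have $\ttt_0 = \varnothing$, so $(\ttt_0\cup\{\varnothing\}).\mathcal{W} = \{\varnothing\}$, and the collection is just $h_\varnothing = h$; the condition on $t \in \ttt_0.\mathcal{W}$ is vacuous. At a limit ordinal $\xi$, the tree $\ttt_\xi = \bigcup_{\zeta<\xi}\ttt_{\zeta+1}$ is a totally incomparable union, so we are free to build the collection on each piece independently. Since $h \in s^\xi_{2\ee}(L) \subset s^{\zeta+1}_{2\ee}(L)$ for each $\zeta < \xi$, the inductive hypothesis applied at $\zeta+1$ produces a collection $(h^\zeta_t)_{t\in(\ttt_{\zeta+1}\cup\{\varnothing\}).\mathcal{W}}$ with root $h$, and gluing these along the common root gives the required family on $(\ttt_\xi\cup\{\varnothing\}).\mathcal{W}$.

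The substantive step is the successor $\xi \to \xi+1$. Recall $\ttt_{\xi+1} = \{(\xi+1)\cat t : t \in \{\varnothing\}\cup \ttt_\xi\}$, so any $t \in \ttt_{\xi+1}.\mathcal{W}$ has the form $((\xi+1, V_1))\cat s$ for some $V_1\in\mathcal{W}$ and some $s\in(\ttt_\xi\cup\{\varnothing\}).\mathcal{W}$. Given $h \in s^{\xi+1}_{2\ee}(L) = s_{2\ee}(s^\xi_{2\ee}(L))$ and a choice of $V_1 \in \mathcal{W}$, the translate $h + V_1$ is a $w^*$-neighborhood of $h$, so $\mathrm{diam}\bigl((h+V_1)\cap s^\xi_{2\ee}(L)\bigr) > 2\ee$. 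Pick two points $h'_1, h'_2$ in this intersection at distance $>2\ee$; by the triangle inequality at least one of them, call it $h_{((\xi+1,V_1))}$, satisfies $\|h_{((\xi+1,V_1))} - h\|_{L_p(X)^*} > \ee$, and since it lies in $h + V_1$ we automatically get $h_{((\xi+1,V_1))} - h \in V_1$. Because $h_{((\xi+1,V_1))} \in s^\xi_{2\ee}(L)$, the inductive hypothesis at $\xi$ supplies a family $(h^{V_1}_s)_{s\in(\ttt_\xi\cup\{\varnothing\}).\mathcal{W}}$ rooted at $h_{((\xi+1,V_1))}$ with the required norm-separation and neighborhood conditions, and we set $h_{((\xi+1,V_1))\cat s} := h^{V_1}_s$. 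The increment condition at the node $((\xi+1,V_1))$ has already been arranged, and at all deeper nodes it is inherited from the inductive hypothesis applied to $h_{((\xi+1,V_1))}$.

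The only conceptual obstacle is the successor step, and specifically the triangle-inequality argument that converts the diameter statement built into the definition of $s_{2\ee}$ (which involves two points in $s^\xi_{2\ee}(L)$) into the one-point statement that we need (a single successor $h_{((\xi+1,V_1))}$ at distance $>\ee$ from $h$). Everything else is bookkeeping dictated by the tree definitions: in particular, no $w^*$-compactness of $L$ is used, since we never need to extract a cluster point — we always choose the next vertex from the set $s^\xi_{2\ee}(L)$ directly, which is what the remark before the proposition is pointing out.
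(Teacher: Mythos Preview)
The paper does not actually give its own proof of this proposition; it simply records that the statement was shown in \cite{C} under the additional hypothesis that $L$ is $w^*$-compact, and observes that the argument there does not use that hypothesis. Your transfinite induction is correct and is precisely the standard argument one expects: the $\xi=0$ case is vacuous, the limit case is a trivial amalgamation over the incomparable pieces $\ttt_{\zeta+1}$, and the successor step uses that $h\in s_{2\ee}(s_{2\ee}^\xi(L))$ together with the triangle inequality to produce, for each $V_1\in\mathcal{W}$, a point of $s_{2\ee}^\xi(L)\cap(h+V_1)$ at norm-distance greater than $\ee$ from $h$, after which the inductive hypothesis builds the subtree rooted there. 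Your final remark that $w^*$-compactness plays no role is exactly the point the paper is making when citing \cite{C}.
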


A collection $(h_t)_{t\in (\ttt_\xi\cup \{\varnothing\}.\mathcal{W})}$ satisfying the condition that for any $ t\in \ttt_\xi.\mathcal{W}$, if $t=(\zeta_i, V_i)_{i=1}^n$, then $h_t-h_{t^-}\in V_n$ will be called \emph{normally} $w^*$-\emph{closed}.   A collection such that for any $t\in \ttt_\xi.\mathcal{W}$, $\|h_t-h_{t^-}\|>\ee$ will be called $\ee$-\emph{separated}. 

Although it was not stated in this way, the following theorem was shown in \cite{L}. Since the statement of this theorem differs significantly from the statement in \cite{L}, we will sketch the statement here for completeness.

\begin{theorem}\cite[Lemma1]{L} Suppose that $K$ is convex. If $n\in \nn$ and $x_1^*,\ldots, x_n^*\in d_{2\ee}^\xi(K)$, then $\sum_{i=1}^n x^*_i 1_{[\frac{i-1}{n}, \frac{i}{n})}\in s_\ee^\xi(M)$.   
\label{Lancien thing}
\end{theorem}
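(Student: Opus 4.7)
The plan is a transfinite induction on $\xi$. The base case ($\xi = 0$) is immediate since $x_i^* \in K$ forces $f \in M = s_\ee^0(M)$, and the limit case follows directly from the intersection definitions of both $d_{2\ee}^\xi$ and $s_\ee^\xi$.

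For the successor step, write $\xi = \zeta + 1$ and let $L = d_{2\ee}^\zeta(K)$. A straightforward transfinite induction on $\zeta$ shows that $L$ is convex and $w^*$-compact: convexity is preserved by the slice derivation because, for $y^* = \tfrac{1}{2}(u^* + v^*)$, any slice containing $y^*$ must contain at least one of $u^*, v^*$; $w^*$-closedness of $L$ is preserved because its complement in $K$ is a $w^*$-open union over slices of small diameter. The inductive hypothesis, applied with $n$ replaced by $nm$ for arbitrary $m \in \nn$, gives that any function $\sum_k u_k^* 1_{[(k-1)/(nm), k/(nm))}$ with $u_k^* \in L$ lies in $s_\ee^\zeta(M)$.

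Fix a $w^*$-neighborhood $V = \{g : |g(y_j) - f(y_j)| < \delta,\ j \leq N\}$ of $f$ in $L_p(X)^*$. I aim to produce $g_0, g_1 \in V \cap s_\ee^\zeta(M)$ with $\|g_0 - g_1\|_{L_p(X)^*} > \ee$. First, for each $i$, I use $x_i^* \in d_{2\ee}(L)$ together with the convexity of $L$ to extract a symmetric chord $u_i^+, u_i^- \in L$ with $\tfrac{1}{2}(u_i^+ + u_i^-) = x_i^*$ and $\|u_i^+ - u_i^-\|_{X^*} > 2\ee$. Next, subdivide each $[(i-1)/n, i/n)$ into $2m$ equal pieces (for $m$ large), labeling them alternately to form measurable sets $A_i^+, A_i^-$ of measure $1/(2n)$; after approximating each $y_j$ in $L_p(X)$ by a step-function $y_j'$ whose level sets are refined by this labeling, a standard Lyapunov-type argument arranges $\int_{A_i^+} y_j' = \int_{A_i^-} y_j'$ for every $i, j$. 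Finally, set
\[
g_0 = \sum_i (u_i^+ 1_{A_i^+} + u_i^- 1_{A_i^-}), \qquad g_1 = \sum_i (u_i^- 1_{A_i^+} + u_i^+ 1_{A_i^-}).
\]
Both are $L$-valued simple functions on the refined equal partition into $2nm$ pieces, hence in $s_\ee^\zeta(M)$ by the inductive hypothesis; the midpoint identity and the integral equality give $g_k(y_j') = f(y_j')$ exactly, so $g_k \in V$ follows by taking $\|y_j - y_j'\|_{L_p(X)}$ sufficiently small; and $g_0 - g_1 = \sum_i (u_i^+ - u_i^-)(1_{A_i^+} - 1_{A_i^-})$ has $L_p(X)^*$-norm $\bigl(\tfrac{1}{n}\sum_i \|u_i^+ - u_i^-\|^q\bigr)^{1/q} > 2\ee$.

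The principal obstacle is the symmetric-chord extraction: the slice hypothesis $d_{2\ee}(L)$ readily produces two points of $L$ more than $2\ee$ apart sharing a slice with $x_i^*$, but repositioning such a chord so that $x_i^*$ is its midpoint requires the factor of $2$ in the hypothesis together with a Hahn-Banach style argument using the convexity of $L$. The remaining Lyapunov labeling and norm computations are routine applications of the density of step functions in $L_p(X)$ and the product structure of $L_p(X)^*$.
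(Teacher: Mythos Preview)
Your inductive framework, the partition-refinement idea, and the Lyapunov-style labeling are all sound and close in spirit to the paper's argument. The gap is exactly where you flag it, but it is worse than you suggest: the symmetric-chord extraction is not merely delicate, it is false in general. Take $X=c_0$, $L=\{y^*\in\ell_1: y^*_k\geqslant 0,\ \sum_k y^*_k\leqslant 1\}$ (the $w^*$-closed convex hull of $\{e_n:n\in\nn\}\cup\{0\}$), and $x^*=0$. Every $w^*$-open slice containing $0$ has the form $\{y^*: y^*(x)>a\}$ with $x\in c_0$ and $a<0$; since $x_n\to 0$, the slice contains $e_n$ for all large $n$ and therefore has diameter $2$. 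Thus $0\in d_{2\ee}(L)$ for every $\ee<1$. But $0$ is an extreme point of $L$, so no nontrivial chord in $L$ passes through it. No Hahn--Banach argument will manufacture the midpoint decomposition $x_i^*=\tfrac12(u_i^++u_i^-)$ you need, and without it your $g_0,g_1$ are $w^*$-close to some other function, not to $f$.

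The paper's proof replaces your two-point midpoint by a $k$-point barycenter, which is precisely what the extreme-point obstruction forces. From $x_i^*\in d_{2\ee}(K_0)$ one deduces via Hahn--Banach only that $x_i^*$ lies in the $w^*$-closed convex hull of $K_0\setminus(x_i^*+\ee B_{X^*})$; one then approximates each $x_i^*$ (in $w^*$) by a finite average $k^{-1}\sum_{j=1}^k x_{ij}^*$ with every $\|x_{ij}^*-x_i^*\|>\ee$. Placing the $x_{ij}^*$ on a partition of $[\tfrac{i-1}{n},\tfrac{i}{n})$ into $kl$ equal subintervals, cycling through $j=1,\ldots,k$ repeatedly, produces a function $\psi_l$ that (i) lies in $L_0$ by the inductive ``niceness'' hypothesis, (ii) satisfies $\|f(\varpi)-\psi_l(\varpi)\|>\ee$ for every $\varpi$, hence $\|f-\psi_l\|>\ee$, and (iii) converges $w^*$ as $l\to\infty$ to the function built from the averages, which lies in $V$. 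Your construction is the special case $k=2$ with equal weights, and that special case is unavailable in general.
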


\begin{proof}[Sketch] Given $K_0\subset X^*$ and $L_0\subset L_p(X)^*$, let us say the pair $(K_0, L_0)$ is \emph{nice} provided that $K_0$ is $w^*$-compact, convex, and symmetric, and for any $n\in \nn$ and $x_1^*, \ldots, x_n^*\in K_0$, $\sum_{i=1}^n x^*_i 1_{[\frac{i-1}{n}, \frac{i}{n})}\in L_0$. Of course, if $(K_0, L_0)$ is nice and $K_0\neq \varnothing$, then $L_0\neq \varnothing$.   We claim that if $(K_0, L_0)$ is nice, then for any $\ee>0$, the pair $(d_{2\ee}(K_0), s_\ee(L_0))$ is nice.   An easy induction then yields that for any ordinal $\xi$, the pair $(d_{2\ee}^\xi(K_0), s_\ee^\xi(L_0))$ is nice, whence $Dz(K_0, 2\ee)\leqslant Sz(L_0, \ee)$.  We obtain Theorem \ref{Lancien thing} by noting that $(K, M)$ is nice.    

We prove the claim that $(d_{2\ee}(K_0), s_\ee(L_0))$ is nice, assuming $(K_0, L_0)$ is nice.  Of course, $d_{2\ee}(K_0)$ is $w^*$-compact, convex, and symmetric.  Fix $n\in \nn$ and $x_1^*, \ldots, x_n^*\in d_{2\ee}(K_0)$. Let $f=\sum_{i=1}^n x_i^*1_{[\frac{i-1}{n}, \frac{i}{n})}\in L_0$.  Let $V$ be a $w^*$-open neighborhood of $f$.   It follows by the Hahn-Banach theorem that for each $1\leqslant i\leqslant n$, $x^*_i$ lies in the $w^*$-closed, convex hull of $K_0\setminus (x_i^*+\ee B_{X^*})$.  Then there exist $k\in \nn$ and $(x_{ij}^*)_{i=1, j=1}^{n,k}\subset K$ such that $$g=\sum_{i=1}^n \Bigl(k^{-1}\sum_{j=1}^k x_{ij}^*\Bigr)1_{[\frac{i-1}{n}, \frac{i}{n})}\in V.$$  

For each $l\in \nn$, let $$\psi_l=\sum_{i=1}^n\sum_{j=1}^k \sum_{m=1}^l x^*_{ij} 1_{I_{ijm}},$$ where $$I_{ijm}=\Bigl[\frac{i-1}{n}+\frac{m-1}{nl}+\frac{j-1}{nlk}, \frac{i-1}{n}+\frac{m-1}{nl}+\frac{j}{nlk}\Bigr).$$  Note that $\psi_l\underset{w^*}{\to} g$, whence $\psi_l\in V$ for sufficiently large $l\in \nn$.   Since $(K_0, L_0)$ is nice, $\psi_l\in L_0$ for all $l\in \nn$.  Also, for any $\varpi\in [0,1]$, $\|f(\varpi)-\psi_l(\varpi)\|>\ee$, whence $\|f-\psi_l\|_{L_p(X)^*}>\ee$.  This shows that $f\in s_\ee(L_0)$.

\end{proof}

We remark that if $h\in L_q(X^*)$ is a simple function such that $\|h\|_{L_q(X^*)}>\ee>0$ and $\|h\|_{L_\infty(X^*)}\leqslant C$, there exists a simple function $f\in B_{L_p(X)}$ with $\|f\|_{L_\infty(X)}\leqslant C^{q-1}/\ee^{q-1}$ and $\int hf>\ee$.   Indeed, write $h=\sum_{i=1}^n x^*_i 1_{F_i}$ with $F_i$ pairwise disjoint and measurable.  Fix $0<\rho<1$ such that $\rho \|h\|_{L_q(X^*)} >\ee$. For each $1\leqslant i\leqslant n$, fix $x_i\in S_X$ such that $x^*_i(x_i)>\rho \|x_i^*\|$.   Then $f=\|h\|_{L_q(X^*)}^{1-q}\sum_{i=1}^n \|x^*_i\|^{q-1} x_i 1_{F_i}$ has the indicated properties by familiar computations.

\begin{lemma}   Suppose that $R\geqslant 1$ is such that $K\subset RB_{X^*}$.  Assume $(h_t)_{t\in (\ttt_\xi\cup \{\varnothing\}).\mathcal{W}}\subset M$ is normally $w^*$-closed and $\ee$-separated for some $\ee\in (0,1)$.  Then there exist a function $\theta:\ttt_\xi.\ddd\to \ttt_\xi.\mathcal{W}$ and a  weakly null collection $(f_t)_{t\in \ttt_\xi.\mathcal{N}}\subset \frac{1}{2}B_{L_\infty(X)}$ such that for any $\varnothing \prec s\preceq t$, $$\text{\emph{Re}}\int h_{\theta(t)}f_s \geqslant \frac{\ee^q}{3 \cdot 2^q R^{q-1}}.$$  Here, $\mathcal{N}$ denotes the directed set of convex, weakly open neighborhoods of $0$ in $L_p(X)$.

\end{lemma}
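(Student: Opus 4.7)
The plan is to argue by transfinite induction on $\xi$, with the main engine being the H\"older-duality remark immediately preceding the lemma: that remark produces, from any simple $L_q(X^*)$-function of $L_p(X)^*$-norm exceeding $\ee$ and $L_\infty(X^*)$-norm at most $C$, a simple test function in $B_{L_p(X)}$ with $L_\infty(X)$-norm at most $C^{q-1}/\ee^{q-1}$ pairing with it to something strictly greater than $\ee$. Applied to each difference $h_t - h_{t^-}$ (which is simple, $\ee$-separated in $L_p(X)^*$-norm, and of $L_\infty(X^*)$-norm at most $2R$ since $K \subset RB_{X^*}$) and rescaled by the factor $\ee^{q-1}/2(2R)^{q-1}$, this produces for each $t$ a simple $\hat f_t \in \tfrac12 B_{L_\infty(X)}$ with
\[
\text{Re}\int (h_t - h_{t^-})\hat f_t > \frac{\ee^q}{2^q R^{q-1}}.
\]
These $\hat f_t$ are the raw material from which the desired $f_s$ will be built by a small perturbation landing them in a prescribed weak neighborhood in $\mathcal{N}$.

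The construction then proceeds recursively along each branch $\varnothing \prec t_1 \prec \cdots \prec t_n$ of the pruned subtree, alternating two streams of choices at stage $k{+}1$. First, with the previously selected $f_{t_1},\ldots,f_{t_k}$ in hand, shrink the next $w^*$-neighborhood $V_{k+1} \in \mathcal{W}$ so that the normally-$w^*$-closed hypothesis $h_{t_{k+1}} - h_{t_k} \in V_{k+1}$ forces $|\int (h_{t_{k+1}} - h_{t_k})f_{t_j}| < \eta$ for every $j \leqslant k$. Second, with $h_{t_1},\ldots,h_{t_{k+1}}$ in hand, select $U_{k+1}\in \mathcal{N}$ inside $\bigcap_{j\leqslant k}\{g:|\int h_{t_j}g|<\eta\}$, and define $f_{t_{k+1}}$ as a small perturbation of $\hat f_{t_{k+1}}$ lying in $U_{k+1} \cap \tfrac12 B_{L_\infty(X)}$, using the $\rho/\varrho$ correspondence of the earlier remark (any $L_\infty$-loss from the perturbation being absorbed into the budget $\eta$). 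Choosing $\eta$ so that every $n$-fold sum of such errors is bounded by $\tfrac13 \cdot \tfrac{\ee^q}{2^q R^{q-1}}$, the telescoping identity
\[
\int h_{\theta(t)} f_s = \int h_{\theta(s)^-} f_s + \int (h_{\theta(s)} - h_{\theta(s)^-}) f_s + \sum_{s \prec r \preceq t}\int (h_{\theta(r)} - h_{\theta(r)^-}) f_s
\]
shows that, for $\varnothing \prec s \preceq t$, the central term contributes more than $\tfrac{\ee^q}{2^q R^{q-1}}$ while the two outer terms contribute at least $-\tfrac23\cdot \tfrac{\ee^q}{2^q R^{q-1}}$, leaving exactly the target $\tfrac{\ee^q}{3\cdot 2^q R^{q-1}}$.

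To thread the recursion through all of $\ttt_\xi$, I would induct on $\xi$, handling successor stages via $\ttt_{\xi+1} = \{(\xi{+}1)\cat t : t \in \{\varnothing\} \cup \ttt_\xi\}$ (treat the root $(\xi{+}1)$, then apply the inductive hypothesis to the translated subcollection $(h_{(\xi+1,V)\cat s})$ under each immediate successor), and handling limit stages via the incomparable union $\ttt_\xi = \bigcup_{\zeta<\xi}\ttt_{\zeta+1}$; the ordinal index of $\ttt_\xi$ controls the maximal path length $n$, which in turn fixes the admissible budget $\eta$. The main obstacle is the interlocking nature of the two selection streams, since every $V_{k+1}$ depends on the previously chosen $f_{t_j}$ and every $f_{t_{k+1}}$ on the previously chosen $h_{t_j}$; this is the standard two-sided ``killing past and future'' bookkeeping, and the factor of $3$ in the denominator of the stated constant is precisely the price of splitting the noise budget among the raw contribution, the past errors, and the future errors.
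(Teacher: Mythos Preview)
There is a genuine gap in how you place $f_{t_{k+1}}$ into the prescribed weak neighborhood $U_{k+1}$. You write that $f_{t_{k+1}}$ is ``a small perturbation of $\hat f_{t_{k+1}}$ lying in $U_{k+1}$'', invoking the $\rho/\varrho$ correspondence. But $\hat f_{t_{k+1}}$ is a fixed nonzero vector determined by $h_{t_{k+1}}-h_{t_k}$, while $U_{k+1}$ is a weak neighborhood of $0$; a small \emph{norm} perturbation cannot in general move $\hat f_{t_{k+1}}$ into $U_{k+1}$, and the $\rho/\varrho$ maps go in the wrong direction (they take a vector already in a weak neighborhood and approximate it by one in a finite-codimensional subspace, not the reverse). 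The paper does not perturb a single test function. Instead, for each $V\in\mathcal{W}$ it builds a test function $j_V$ for $h_{t\cat(\eta,V)}-h_t$, obtaining a bounded net $(j_V)_{V\in\mathcal{W}}$, and then invokes \cite[Lemma~3.3]{CD} to select $V_1,V_2$ with $\tfrac12(j_{V_2}-j_{V_1})\in U$ while retaining the pairing estimate up to a controlled loss $\ee_0$. Setting $g_s=\tfrac12(j_{V_2}-j_{V_1})$ and $\theta(s)=t\cat(\eta,V_2)$ is what simultaneously produces the function \emph{and} the pruning; your sketch decouples these two choices and thereby loses the mechanism that forces weak-nullness.

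A secondary issue: you assert that ``the ordinal index of $\ttt_\xi$ controls the maximal path length $n$, which in turn fixes the admissible budget $\eta$''. For $\xi\geqslant\omega$ the branches of $\ttt_\xi$ have unbounded length, so no single $\eta$ makes every $n$-fold sum of errors small. The paper fixes instead a summable sequence $(\ee_n)_{n\geqslant 0}$ with $\ee/2-\sum_n\ee_n>\ee/3$ and arranges the error at level $n$ to be at most $\ee_n$; the telescoping estimate then controls a tail of this series rather than $n$ copies of a fixed $\eta$. Correspondingly the induction in the paper is on $|s|$, not on $\xi$.
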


\begin{proof} We will need the following claim.  

\begin{claim} If $(h_t)_{t\in (\ttt_\xi\cup \{\varnothing\}).\mathcal{W}}\subset M$ is normally $w^*$-closed and $\ee$-separated, then for any sequence $(\ee_n)_{n=0}^\infty$ of positive numbers, there exists a monotone, length-preserving function $\theta:\ttt_\xi.\ddd_0\to \ttt_\xi.\mathcal{W}$ and a collection $(g_t)_{t\in \ttt_\xi.\mathcal{N}}\subset \frac{2^{q-1} R^{q-1}}{\ee^{q-1}}B_{L_\infty(X)}$ such that for every $s\in \ttt_\xi.\mathcal{N}$, $\text{\emph{Re}}\int h_{\theta(t)}g_s > \ee/2-\ee_0$ and such that for any $\varnothing\prec s\prec t$, $|\int (h_{\theta(t)}-h_{\theta(t^-)})g_s|<\ee_{|t|}$, and if $s=(\zeta_i, U_i)_{i=1}^n$, $g_s\in U_n$.  

\end{claim}

We first assume the claim and finish the proof.  We apply the claim with some sequence $(\ee_n)_{n=0}^\infty$ such that $\ee/2-\sum_{n=0}^\infty \ee_n>\ee/3$.  Fix any $t\in \ttt_\xi.\ddd_0$ and let $\varnothing\prec s\preceq t$.   Then \begin{align*} \text{Re}\int h_{\theta(t)}g_s & \geqslant \text{Re}\int h_{\theta(s)}g_s - \sum_{s\prec u\preceq t}|\int (h_{\theta(u)}-h_{\theta(u^-)})g_s| > \ee/2-\ee_0-\sum_{n=|s|+1}^{|t|}\ee_n \\ & > \ee/2-\sum_{n=0}^\infty \ee_n>\ee/3.\end{align*} From this it follows that for any convex combination $g$ of $(g_s:s\preceq t)$, $\text{Re}\int h_{\theta(t)}g >\ee/3$.   Letting $f_t=\frac{\ee^{q-1}}{2^q R^{q-1}}g_t$ gives the desired collection.  Since $\ee^{q-1}/2^q R^{q-1}\in (0,1)$, if $s=(\zeta_i, U_i)_{i=1}^n$, $f_s\in U_n$, since $U_n$ is a convex weak neighborhood of $0$ and $g_s\in U_n$.  This condition guarantees that the collection $(f_t)_{t\in \ttt_\xi.\mathcal{N}}$ is weakly null.

We return to the proof of the claim.  We define $g_s$ and $\theta(s)$ by induction on $|s|$, and we define $\theta$ to be monotone, length-preserving, and so that for any $s=(\zeta_i, U_i)_{i=1}^n$, $\theta(s)=(\zeta_i, V_i)_{i=1}^n$ for some $V_1, \ldots, V_n\in \mathcal{W}$.  Note that these properties together imply that if $\theta(s)=(\zeta_i, V_i)_{i=1}^n$ and $1\leqslant m\leqslant n$, $\theta(s|_m)=(\zeta_i, V_i)_{i=1}^m$.

 Suppose $(h_t)_{t\in (\ttt_\xi\cup \{\varnothing\}).\mathcal{W}}\subset M$ is as in the claim.  Fix some $(\zeta, U)\in \ttt_\xi.\mathcal{N}$.   For every $V\in \mathcal{W}$, $\|h_{(\zeta, V)}-h_\varnothing\|_{L_q(X^*)}>\ee$ and $h_{(\zeta, V)}-h_\varnothing$ is a simple function with $\|h_{(\zeta, V)}-h_\varnothing\|_{L_\infty(X^*)}\leqslant 2R$.   By the remarks preceding the lemma, there exists a simple function $j_V\in B_{L_p(X)}\cap \frac{2^{q-1}R^{q-1}}{\ee^{q-1}}$ such that $\text{Re}\int (h_{(\zeta, V)}-h_\varnothing)j_V>\ee$.  By \cite[Lemma $3.3$]{CD}, for each $U\in \mathcal{N}$, there exist $V^U_1, V^U_2\in \mathcal{W}$ such that $$\text{Re}\int h_{(\zeta, V^U_2)}\Bigl(\frac{j_{V_2^U}-j_{V_1^U}}{2}\Bigr) >\ee/2-\ee_0$$  and $$\frac{j_{V_2^U}-j_{V_1^U}}{2}\in U.$$   We let $g_{(\zeta, U)}= \frac{j_{V^U_2}-j_{V_1^U}}{2}$ and $\theta((\zeta, U))=(\zeta, V^U_2)$.    
 
Now suppose that for some $s=s_1\cat (\eta, W)\in \ttt_\xi.\mathcal{N}$ with $s_1\neq \varnothing$, and for every $\varnothing\prec u\preceq s_1$, $g_u$ and $\theta(u)$ have been defined to have the indicated properties.  Let $t=\theta(s_1)$.  For every $V\in \mathcal{W}$, $\|h_{t\cat (\eta, V)}-h_t\|>\ee$, $\|h_{t\cat (\eta, V)}-h_t\|_{L_\infty(X^*)}\leqslant 2R$, and the function $h_{t\cat (\eta,V)}-h_t$ is simple, whence there exists a simple function $i_V\in B_{L_p(X)}$ with $\|i_V\|_{L_\infty(X)} \leqslant 2^{q-1}R^{q-1}/\ee^{q-1}$ such that $\text{Re}\int (h_{t\cat (\eta, V)}-h_t)i_V> \ee$.   Again using \cite[Lemma $3.3$]{CD}, there exist $V^W_1, V^W_2\in \mathcal{W}$ such that $$\text{Re}\int h_{t\cat(\eta, V^W_2)}\Bigl(\frac{i_{V_2^W}-i_{V_1^W}}{2}\Bigr) >\ee/2-\ee_0,$$ $$\frac{i_{V_2^W}-i_{V_1^W}}{2}\in W,$$ and $$V_2^W\subset \{h\in L_p(X)^*: (\forall \varnothing\prec u\preceq s_1)(|h(g_u)|<\ee_{|s|})\}.$$ We let $g_s= \frac{i_{V^W_2}-i_{V^W_1}}{2}$ and $\theta(s)=t \cat (\eta, V^W_2)$.  This finishes the construction, and the conclusions of the claim are easily verified.

\end{proof}

\begin{corollary} If $K$ is convex and  $Dz(K)>\omega^{1+\xi}$, then there exists a constant $\ee'>0$ and a  weakly null collection $(f_t)_{t\in \ttt_{\omega^{1+\xi}}.\mathcal{N}}\subset \frac{1}{2}B_{L_\infty(X)}$ such that for every $t\in \ttt_{\omega^{1+\xi}}.\ddd_0$ and every convex combination $f$ of $(f_s: s\preceq t)$, $$\underset{h\in M}{\sup} \text{\emph{Re}}\int hf \geqslant \ee'.$$

\label{corollary}
\end{corollary}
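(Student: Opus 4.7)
My plan is to convert $Dz(K)>\omega^{1+\xi}$ into an $s$-index statement for $M$ via Theorem~\ref{Lancien thing}, and then feed the result through the Proposition and the Lemma that immediately precede the Corollary.

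First I will extract from $Dz(K)>\omega^{1+\xi}$ a parameter $\delta>0$ with $d_\delta^{\omega^{1+\xi}}(K)\neq \varnothing$, and fix $\eta\in(0,1)$ small enough that $4\eta\leqslant \delta$; the monotonicity of $d_\bullet$ in its parameter then gives $d_{4\eta}^{\omega^{1+\xi}}(K)\neq \varnothing$. Picking any $x^*\in d_{4\eta}^{\omega^{1+\xi}}(K)$ and applying Theorem~\ref{Lancien thing} with $n=1$ and $\ee=2\eta$ (this is where convexity of $K$ enters), the constant function $h:=x^*\cdot 1_{[0,1)}$ lies in $s_{2\eta}^{\omega^{1+\xi}}(M)$.

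Next I will feed $h$ into the Proposition (taking $L=M$, ordinal $\omega^{1+\xi}$, and separation parameter $\eta$) to obtain a normally $w^*$-closed, $\eta$-separated collection $(h_t)_{t\in(\ttt_{\omega^{1+\xi}}\cup\{\varnothing\}).\mathcal{W}}\subset M$ with $h_\varnothing=h$. Fixing $R\geqslant 1$ with $K\subset RB_{X^*}$ and invoking the Lemma directly above the Corollary (which requires $\eta\in(0,1)$, as already secured), I obtain a monotone map $\theta$ and a weakly null collection $(f_t)_{t\in \ttt_{\omega^{1+\xi}}.\mathcal{N}}\subset \frac{1}{2}B_{L_\infty(X)}$ satisfying
$$
\mathrm{Re}\int h_{\theta(t)}\,f_s \;\geqslant\; \frac{\eta^q}{3\cdot 2^q R^{q-1}} \;=:\; \ee'
$$
for every $t\in \ttt_{\omega^{1+\xi}}.\ddd_0$ and every $\varnothing\prec s\preceq t$.

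Finally, linearity of integration propagates this per-prefix bound to every convex combination $f=\sum_s a_s f_s$ of $(f_s:s\preceq t)$, and since $h_{\theta(t)}\in M$ this gives $\sup_{h\in M}\mathrm{Re}\int hf \geqslant \ee'$. I do not anticipate a real obstacle: the entire argument is a chain of reductions, and the only bookkeeping is to choose a single $\eta$ that simultaneously absorbs the factor of two lost in Lancien's theorem, the further factor of two lost in the Proposition, and the requirement $\eta<1$ imposed by the Lemma --- any $\eta\leqslant \min(\delta/4,1/2)$ does the job.
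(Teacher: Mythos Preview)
Your proposal is correct and is precisely the chain of reductions the paper intends: Theorem~\ref{Lancien thing} converts $Dz(K)>\omega^{1+\xi}$ into $s_{2\eta}^{\omega^{1+\xi}}(M)\neq\varnothing$, the Proposition yields the normally $w^*$-closed, $\eta$-separated tree in $M$, and the Lemma produces the weakly null collection $(f_t)$ with the per-prefix lower bound, which passes to convex combinations by linearity. The paper gives no explicit proof of the Corollary, leaving it as an immediate consequence of these three results, and your argument fills in exactly that.
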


\section{Proof of the main theorem}

\begin{proof}[Proof of Theorem \ref{main theoremm}] Let $K$ denote the $w^*$-closed, convex, symmetrized hull of $K_0$.  By \cite[Theorem $1.5$]{C2}, $Sz(K)\leqslant \omega^\xi$.    If $Dz(K)>\omega^{1+\xi}$, there exists a constant $\ee'>0$ and a normally weakly null collection $(f_t)_{t\in \ttt_{\omega^{1+\xi}}.\mathcal{N}}\subset \frac{1}{2}B_{L_\infty(X)}$ as in the conclusoin of Corollary \ref{corollary}.  By Theorem \ref{work}, the existence of such a collection implies that $Sz(K)>\omega^\xi$.  It follows that $Dz(K)\leqslant \omega^{1+\xi}$, whence $Dz(K_0)\leqslant Dz(K)\leqslant \omega^{1+\xi}$. This gives $(i)$.        

For $(ii)$, note that if $K$ is convex, $Sz(K)=\omega^\xi$ for some ordinal $\xi$, or $Sz(K)=\infty$ if $K$ is not $w^*$-fragmentable.   In the first case, by $(i)$, we deduce that $Dz(K)\leqslant \omega^{1+\xi}=\omega Sz(K)$.  If $K$ is not $w^*$-fragmentable, it is not $w^*$-dentable, and $Dz(K)=\infty=\omega \infty=\omega Sz(K)$ by convention.  If $Sz(K)\geqslant \omega^\omega$, then either $Sz(K)=Dz(K)=\infty$ or $Sz(K)=\omega^\xi$ and $Dz(K)\leqslant \omega^{1+\xi}$ for an ordinal $\xi\geqslant \omega$.  But since $\xi\geqslant \omega$, $1+\xi=\xi$, and $Dz(K)\leqslant \omega^{1+\xi}=\omega^\xi=Sz(K)$.

\end{proof}

As we have already mentioned, for every $n\in \nn\cup \{0\}$, there exist a pair of Banach spaces $X_n$, $Y_n$ such that $Sz(X_n)=Dz(X_n)=\omega^n$ and $Dz(Y_n)=\omega Sz(Y_n)=\omega^{n+1}$, so that Theorem \ref{main theoremm} is sharp.

If $A:X\to Y$ is an operator, for any $1<p<\infty$, $A$ induces an operator $A_p:L_p(X)\to L_p(Y)$ such that for any $\varpi\in [0,1]$, $(A_p f)(\varpi)=A(f(\varpi))$.  Since $(A^*B_{Y^*}, (A_p)^*B_{L_p(Y)^*})$ is nice, Theorem \ref{Lancien thing} yields that $Dz(A)\leqslant Sz(A_p)$.    Thus a positive solution to the next question implies Theorem \ref{main theoremm}.  

\begin{question} For any operator $A:X\to Y$ and $1<p<\infty$, is it true that $Sz(A_p)\leqslant \omega Sz(A)$?   
\label{question}
\end{question} 

By \cite{HS}, Question \ref{question} has a positive answer when $A$ is an identity operator and $Sz(A)$ is countable.  It is possible to deduce using arguments similar to those in \cite{HS} that if $Sz(A)$ is countable, Question \ref{question} has a positive answer.  

A positive solution to the following question would imply a positive solution to Question \ref{question}.

\begin{question} For any operator $A:X\to Y$ and $1<p<\infty$, is it true that $Dz(A)=Sz(A_p)$?

\end{question}

\end{document}